\newtheorem{thm}{Theorem}[section]
\newtheorem{cor}[thm]{Corollary}
\newtheorem{lem}[thm]{Lemma}
\newtheorem{lem-def}[thm]{Lemma-Definition}
\newtheorem*{lem*}{Lemma}
\newtheorem{prop}[thm]{Proposition}
\newtheorem*{thm*}{Theorem}
\newtheorem*{prop*}{Proposition}
\theoremstyle{definition}
\newtheorem{defn}[thm]{Definition}
\theoremstyle{remark}
\newtheorem{rem}[thm]{Remark}
\newtheorem{obs}[thm]{Observation}
\newtheorem{que}[thm]{Open Question}
\def\ZZ{\mathbb{Z}}
\def\QQ{\mathbb{Q}}
\def\RR{\mathbb{R}}
\def\Alphabet{A} %the alphabet
\def\AA{\mathcal{A}}%the free group algebra
\def\AA{\gring{k}{F}}
\newcommand{\ve}[1]{\mathbf{#1}} %element of a k- algebra
\def\uu{\ve{u}}
\def\vv{\ve{v}}
\def\ww{\ve{w}}
\renewcommand{\tt}{\ve{t}}
\def\Ocal{\mathcal{O}}
\newcommand{\tsr}{\otimes}
\newcommand{\zero}{^{(0)}}
\newcommand{\one}{^{(1)}}
\newcommand{\two}{^{(2)}}
\newcommand{\onezero}{^{(10)}}
\newcommand{\oneone}{^{(11)}}
\newcommand{\onetwo}{^{(12)}}
\newcommand{\isoto}{\xrightarrow{\cong}}
\renewcommand{\bar}{\overline}
\newcommand{\ab}{^{\mathrm{ab}}}
\newcommand{\wozero}{\backslash\{0\}}
\newcommand{\gring}[2]{#1#2}
\newcommand{\without}{\backslash}
\newcommand{\rprod}{\cdot}
\renewcommand{\phi}{\varphi}
\newcommand{\inject}{\hookrightarrow}
\newcommand{\thecurve}{W}
\newcommand{\vecspan}[2]{#1\mbox{-span}#2}
\DeclareMathOperator{\supp}{Supp}
\DeclareMathOperator{\Frac}{Frac}
\DeclareMathOperator{\Spec}{Spec}
\DeclareMathOperator{\rank}{rank}
\newsavebox{\@brx}
\newcommand{\llangle}[1][]{\savebox{\@brx}{\(\m@th{#1\langle}\)}%
\mathopen{\copy\@brx\kern-0.5\wd\@brx\usebox{\@brx}}}
\newcommand{\rrangle}[1][]{\savebox{\@brx}{\(\m@th{#1\rangle}\)}%
  \mathclose{\copy\@brx\kern-0.5\wd\@brx\usebox{\@brx}}}
\title{Centralizers in Free Group Algebras and Nonsingular Curves}
\author{Nikita Miasnikov}
\address{SUNY Oswego\\
Oswego, NY 13126}
\email{nikita.miasnikov@oswego.edu}
\begin{document}
\bibliographystyle{alpha}
\date\today
\begin{abstract}
The centralizer of any non-scalar element of a free group algebra over a field is the coordinate ring of a nonsingular curve.
\end{abstract}

\maketitle
%%%%%%%%%%%%%%%%%%%%%%%%%%%%%%%%%%%%%%%%%%%%%%%%%%%%%%%%%%%%%%%%%%%%%%%%%%%
%
% SECTION
%
%%%%%%%%%%%%%%%%%%%%%%%%%%%%%%%%%%%%%%%%%%%%%%%%%%%%%%%%%%%%%%%%%%%%%%%%%%%
\section{Introduction}\label{intro}
Let $k$ be a field with algebraic closure $\bar k$.  G. Bergman showed in~\cite{Bergman} (see also ~\cite{Lothaire} for an exposition) that the centralizer $C$ of a non-scalar element $\uu$ in a free associative algebra over $k$ is isomorphic to $k[t]$, the ring of polynomials in one indeterminate over $k$.  His proof can be split into two parts.

First he showed~\cite[Lemma 1.5, Prop. 2.2 and Prop. 4.5]{Bergman} that, to paraphrase the original formulation, $C$ is {\em the affine coordinate ring of the complement of a $k$-point in a proper nonsingular curve over $k$.}
%is a {\em commutative integrally closed (in its field of fractions) integral domain and a finite $k[\uu]$-module in which the valuation of $k[\uu]$ at infinity is totally ramified}.   
Such rings are discussed in Section~\ref{sec:niceCurves}.  

The second part of his proof went as follows.  Bergman proved~\cite[Prop. 5.2]{Bergman} that  {\em Every finitely generated subalgebra $R\neq k$ of a free $k$-algebra admits a homomorphism $f:R\to k[t]$ which is non-trivial in the sense $f(R) \neq k$. } Since from the first part of the proof one knows that $C$ is a $1$-dimensional integral domain, it follows that any non-trivial homomorphism $C\to k[t]$ must be an embedding.  To complete the argument, Bergman invoked the following result due to P.M. Cohn~\cite[Prop. 2.1]{CohnSubalgebras} which uses Luroth's Theorem: {\em Any integrally closed subalgebra of $k[t]$, other than $k$, is of the form $k[y]$.}

%Speaking of the problem of describing centralizers in general, it is a good remark that when the centralizer of a non-central element $x$ is commutative, it is the maximal commutative subring containing $x$.  Hence in a ring where centralizers of non-central elements are commutative, the problem of describing centralizers is that of describing maximal commutative subrings.  

Fix a finite alphabet $\Alphabet$.
The free associative algebra $k\langle A\rangle$ on $\Alphabet$ is contained in the free skew field $k(\langle A\rangle)$ on the same alphabet over $k$.   
We can think of the elements of $k\langle A\rangle$ as non-commutative polynomials and of elements of $k(\langle A\rangle)$ as non-commutative rational functions.   % Then Bergman's result can be viewed as an attack on the following variation on Luroth's Problem:
%{\em What  commutative fields occur inside the free skew field?}  The commutative fields he found were all isomorphic  to $k(t)$.
%
%So viewed, Bergman's result was only an opening salvo, because he only looked at fields of fractions of commutative subrings of the free associative algebra.  
Perhaps the next simplest subalgebra of the free skew field is the $k$-algebra generated by the elements of $\Alphabet$ and their inverses.   This is none other than the free group algebra on the set of generators $\Alphabet$, and its elements can be thought of as non-commutative Laurent polynomials. 

Let $F$ be a free group on $\Alphabet$.  % and $\gring{k}{F}$ its group algebra.  
In this paper, we transplant the first part of Bergman's proof into the setting of the free group algebra $\gring{k}{F}$.  Our main results are Proposition~\ref{prop:hg}  and Theorem~\ref{theorem}.  

Theorem~\ref{theorem} combined with Corollary~\ref{Laurent} give the following description of the centralizer $C$ of a non-scalar element $\uu\in \gring{k}{F}$.  

{\em Either $C$ is isomorphic to the ring of Laurent polynomials $k[t,t^{-1}]$ (Corollary ~\ref{Laurent}), or else $C$ is the affine coordinate ring of the complement of a $k$-point in a proper nonsingular curve $\thecurve$ over $k$.
%a commutative, integrally closed integral domain, and is a finite integral extension of $k[\uu]$ in which the valuation of $k[\uu]$ at $\infty$ is totally ramified. 
 (Theorem~\ref{theorem}). }
 
 Proposition~\ref{prop:hg} says that for a class of elements, which we call {\em homogeneous}, the centralizer $C\cong k[t]$, i.e. the curve $\thecurve$ is in fact a line. 
 
We do not have an example in which that curve is not a line.

\section{Acknowledgments}
I thank the referee, Marian Anton, Maggie Habeeb, Andrew Stout, and Phillip Williams for all of their comments and suggestions.   I thank my father for too many things to list.
%%%%%%%%%%%%%%%%%%%%%%%%%%%%%%%%%%%%%%%%%%%%%%%%%%%%%%%%%%%%%%%%%%%%%%%%%%%
%
% SECTION
%
%%%%%%%%%%%%%%%%%%%%%%%%%%%%%%%%%%%%%%%%%%%%%%%%%%%%%%%%%%%%%%%%%%%%%%%%%%%
\section{Preliminaries}\label{prelim}

\subsection{Free group algebras}\label{fga}
Let $F$ be a free group on a finite alphabet $\Alphabet$.  For $u \in F$, we denote by $\bar{u}$ the {\em reduced word} in the alphabet $\Alphabet^{\pm 1}:=\{a, a^{-1} | a\in \Alphabet\}$  which {\em represents $u$}. 

% For $u,z\in F$, we say that $u$ is a prefix (suffix) of $z$ whenever the {\em word} $\bar u$ is a prefix (resp. suffix) of the {\em word} $\bar z$.  By $w\leq z$ ($w< z$) we mean that $w$ is a prefix (resp. proper prefix) of $z$. 

 We denote by $\gring{k}{F}$ the group algebra of $F$ over $k$.  By definition, $\AA$ is a $k$-vector space.  One thinks of $F$ as a subset of $\AA$, and in fact, $F$ is a $k$-basis of $\AA$.  Moreover, $\AA$ is equipped with multiplication which linearly extends multiplication of $F$.   Every element $\uu$ of $\AA$ can be uniquely written as 
\[\uu=\sum_{g\in F} a_g g\]
where $a_g\in k$ and only finitely many of $a_g$ are nonzero.
For $\uu\in \AA$,  $ \supp(\uu) $ will denote the {\em support } of $\uu$, i.e. the set of elements of $F$  which appear with non-zero coefficients in $\uu$.   %Similarly, for $S\subseteq\AA$, the {\em support} of $S$ is the set
%\[\supp(S) = \bigcup_{\uu\in S}\supp(\uu).\]  
By a {\em monomial} or {\em term} we mean an element of $\AA$ whose support consists of one element.

  %We say a word in the alphabet $\AA^{\pm 1}:=\{a, a^{-1} | a\in A\}$ is {\em reduced} if it contains no occurrences of symbols $a$ adjacent to $a^{-1}$.  
 
Let $X= \{x_a | a\in A\}$ be distinct letters indexed by the alphabet $\Alphabet$.  A very useful fact about $\AA$ due to W. Magnus and R. Fox~\cite[Theorem 4.3]{Fox}, which we use, is that $\AA$ can be embedded into the algebra of non-commutative formal power series $k\llangle X\rrangle$ in the indeterminates $x_a$ ($a\in\Alphabet$). This embedding, called the {\em Magnus-Fox embedding}, is effected by sending each letter $a\in A$ to $1+x_a$.   Note that this embedding induces an isomorphism onto  $k\llangle X\rrangle$ of the completion of $\AA$ with respect to the augmentation ideal. 

A free group can be bi-ordered by pulling back an ordering along the embedding $\gring{\ZZ}{F}\to \ZZ\llangle X\rrangle$.   For any bi-orderable group $G$, it is easy to show, that  $\gring{k}{G}$ {\em is an integral domain in which the only units are the monomials}.  In particular, $\AA$ has these properties.  

Moreover, $\AA$ is a {\em free ideal ring} or {\em  fir}~\cite[Corollary 7.11.8, page 507]{Cohn}.  A fir is a ring in which every one-sided ideal, left or right, is a free module of a unique rank (see ~\cite[Ch. 2]{Cohn} for an exposition of firs).

\subsection{Degree functions}\label{degFun}
\begin{defn} \label{def:degFun}Given a $k$-algebra $R$, a {\em degree function} on $R$ over $k$  is a map 
\[d:R\to \{-\infty\}\cup \RR\]
 which satisfies the properties: $\forall x,y \in R$,
\[d(x)=-\infty\iff x=0, \quad\quad x\in k\wozero \implies d(x)=0\]
 \begin{align}
 d(xy)&= d(x) + d(y)\label{degmult}\\
d(x+y)  &\leq \max\{d(x), d(y)\}\label{eq:ultratriangle}
\end{align}
A degree function is {\em discrete} if it maps $R\wozero$ into a cyclic subgroup of $(\RR,+)$.
\end{defn}

\begin{rem} \label{rem:valuations} Identity~(\ref{degmult}) shows that any $R$ that admits a degree function is an integral domain (i.e. $R$ has no zero-divisors).   When $R$ is a {\em commutative} integral domain, $d$ is a degree function if and only if $-d$ extends to a real-valued valuation on $\Frac C$, the field of fractions of $C$.   In fact $d$ {\em determines} the resulting valuation $v$ because $-d$ extends to $v$ {\em uniquely} via
\[v\left(\frac{x}{y}\right)= d(y)-d(x).\]   
Finally $v$ is discrete whenever $d$ is discrete and non-trivial (i.e. not identically zero on $R\wozero$).
\end{rem}

\begin{rem}\label{rem:isosceles} The following property, usually seen with ultra-metrics (and interpreted as ``triangles are isosceles"), holds here as well:
\[d(y)< d(x) \implies d(x+y) = d(x).\]
Indeed, denote $z=x+y$.  By (\ref{eq:ultratriangle}), $d(z)\leq d(x)$.  If $d(z)<d(x)$, then $x=z-y$ gives $d(x)\leq \max\{d(z), d(-y)\}< d(x)$, a contradiction. 
\end{rem}

A group homomorphism  
\[h: F\to (\RR,+)\]
defines a grading on $\AA$ with indices in $\RR$.  Thus 
\[\AA = \bigoplus_{a\in \RR} \AA_a\quad\mbox{ where }\AA_a = \vecspan{k}{(h^{-1}(a))}.\]
   Note that multiplication in $\AA$ is compatible with grading: $(\AA_a) (\AA_b) \subseteq \AA_{a+b}$.  The $k$-vector spaces $\AA_a$ are the {\em $h$-homogeneous} components of $\AA$. If $\uu \in (\AA)_a$ is non-zero, we say that $\uu$ is {\em $h$-homogeneous}.   For $\uu\in \AA\wozero$, we denote by $\uu_h$ its {\em highest-degree homogeneous component}.   A key observation is that since $\AA$ is an integral domain,
\begin{equation}
(\uu\vv)_h=\uu_h\vv_h.\label{eq:key}
\end{equation}
We extend $h$ to $\AA$ by setting
\[h(\uu)= h(\uu_h)=\max\{ h(u) | u\in \supp(\uu)\}, \quad h(0)=-\infty.\]
With the aid of (\ref{eq:key}) one can see that the resulting map $h:\AA\to\{-\infty\}\cup\RR$ is a degree function on $\AA$ over $k$.  We refer to this degree as {\em $h$-degree}. When no confusion is possible we may drop explicit references to the homomorphism $h$ and use the plain terms {\em homogeneous} and {\em degree}.

\subsection{Basic properties of centralizers}\label{centralizerBasics} In $\AA$, {\em the centralizer $C$ of an element $\uu\not\in k$   is a commutative and integrally closed integral domain}. 
Commutativity follows from the Magnus-Fox embedding of $\AA$ into the non-commutative power series ring $k\llangle X\rrangle$ (cf. Section~\ref{fga}), since in $k\llangle X\rrangle$, the centralizer of any element which is not contained in $k$ is isomorphic to the ring of formal power series $k\llbracket t\rrbracket$ (See~\cite[Corollary 6.7.2, p. 375]{Cohn}).

%The assertion that $\AA$ is integrally closed follows from a more general result of G. Bergman, (\cite{Bergman}, Corollary 4.4). 
The assertion that $\AA$ is integrally closed is a consequence of the following proposition due to G. Bergman~\cite[Corollary 4.4]{Bergman} which relies on the notion of a $2$-fir.  For any cardinal $n$, a {\em right $n$-fir} is a ring in which all right ideals generated by $n$ or fewer elements are free and have unique rank. For finite $n$,  the notions of left $n$-fir and right $n$-fir are equivalent (\cite[Ch 2]{Cohn}).  Clearly any fir is a $2$-fir.
\begin{prop}[G. Bergman]  Let $R$ be a $k$-algebra which is an integral domain, such that $k(t)\tsr R$ is a $2$-fir and such that $R$ remains an integral domain under tensoring with all finite algebraic extensions of $k$.   Let $C$ be a commutative subring of $R$ and $\widetilde{C}$ the integral closure of $C$ in its field of fractions.  Then the inclusion of $C$ in $R$ extends uniquely to an embedding of $\widetilde{C}$ in $R$.
\end{prop}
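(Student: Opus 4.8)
The plan is to dispose of uniqueness immediately, reduce existence to the case of adjoining a single integral element, reduce that in turn to a divisibility question inside $R$, and finally settle the divisibility using the $2$-fir hypothesis on $k(t)\tsr R$.

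\emph{Uniqueness and reductions.} If $\phi\colon\widetilde{C}\to R$ extends the inclusion $C\injects R$, then for $\alpha\in\widetilde{C}$ written as $\alpha=a/b$ with $a,b\in C$ and $b\neq 0$ we have $b\,\phi(\alpha)=\phi(b\alpha)=\phi(a)=a$, so $\phi(\alpha)$ is forced to be the unique $x\in R$ with $bx=a$ ($R$ being a domain); hence uniqueness, and the whole task is existence. Write $\widetilde{C}$ as the directed union of its subrings $C'$ that are finitely generated as $C$-modules (a sum of two such is again one); by the uniqueness just noted the sought embeddings of these agree on overlaps and glue, and writing $C'=C[\beta_1,\dots,\beta_m]$ with each $\beta_i$ integral over $C$ one adjoins the $\beta_i$ one at a time — at each stage the previously built ring is a commutative subring of $R$ with fraction field $\Frac C$, over which the next $\beta_i$ is integral. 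So it suffices to prove: \emph{if $\alpha\in\Frac C$ is integral over the commutative subring $C\subseteq R$, then $C\injects R$ extends to an embedding $C[\alpha]\injects R$.} Fix such $\alpha=a/b$ and a monic $f(X)=X^n+c_{n-1}X^{n-1}+\dots+c_0\in C[X]$ with $f(\alpha)=0$. I claim it is then enough to find $x\in R$ commuting with every element of $C$ and satisfying $bx=a$. Such an $x$ commutes with $b$, so $bx=xb=a$, and for any $g(X)=\sum_j g_jX^j\in C[X]$ with $g(\alpha)=0$,
\[
b^{\deg g}g(x)=\sum_j g_j\,a^j b^{\deg g-j}=b^{\deg g}g(a/b)=0
\]
in $R$ (the middle expression lies in $C\subseteq R$ and already vanishes in $\Frac C$), whence $g(x)=0$ since $R$ is a domain; in particular $f(x)=0$. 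Thus $X\mapsto x$ descends to a ring homomorphism $\phi\colon C[\alpha]\to R$ fixing $C$, and $\phi$ is injective because a nonzero kernel element, being integral over $C$, has a minimal polynomial over $\Frac C$ with nonzero constant term $e_0$, which on clearing denominators and applying $\phi$ forces $se_0=0$ in $R$ with $se_0\in C\wozero$ — impossible in a domain.

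\emph{The divisibility step.} It remains to produce $x\in R$ with $bx=a$; it then commutes with each $c\in C$ automatically, since $b(xc-cx)=(bx)c-c(bx)=ac-ca=0$ and $R$ is a domain. Work in $S:=k(t)\tsr R\supseteq R$, a $2$-fir. Because $a,b\in C$ commute, $ab=ba$ is a nonzero common multiple, so $aS\cap bS\neq 0$; in a $2$-fir this means the $2$-generated right ideal $aS+bS$ is not free of rank $2$, hence is principal, and $a$ and $b$ acquire a common one-sided factor $r$ with $a=ra_2$, $b=rb_2$ and $a_2,b_2$ coprime on the other side. The assertion $a\in bS$ then amounts precisely to $b_2$ being a unit of $S$. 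To force this one feeds in the full integral relation $f(\alpha)=0$ — in cleared form $a^n+c_{n-1}ba^{n-1}+\dots+c_0b^n=0$ in $C$, which makes $b$ left-divide the high powers of $a$ — and exploits the cancellation of one-sided factors available in the $2$-fir $S$; the hypothesis that $R$ remains a domain under finite algebraic extension of $k$ is used to keep the auxiliary rings (obtained by adjoining roots of $f$ and related polynomials, and then tensoring back with $k(t)$) domains, resp.\ $2$-firs, so that these divisibility arguments go through. Finally, the $x$ so produced a priori lies in $S$ but in fact in $R$: expanding $x=\sum_\beta \beta\tsr r_\beta$ over a $k$-basis of $k(t)$ containing $1$, the relation $bx=1\tsr a$ forces $br_\beta=0$, hence $r_\beta=0$, for $\beta\neq 1$, so $x\in 1\tsr R=R$.

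\emph{Main obstacle.} The reductions of the first paragraph are routine. The substance is the divisibility $a\in bS$: showing that integrality of the fraction $a/b$ over the commutative ring $C$, combined with the $2$-fir structure of $k(t)\tsr R$ and the persistence of integral domains under finite base field extension, forces $b$ to be a left divisor of $a$ in $S$ — equivalently, that the coprime remainder $b_2$ of $b$ after extracting the common factor with $a$ is a unit. This is the step I expect to require real work; the passage from $S$ back down to $R$, and the verification that the resulting $x$ centralizes $C$, are both short.
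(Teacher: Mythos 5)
Your preparatory work is correct and I have no quarrel with it: uniqueness via $b\,\phi(\alpha)=a$ in the domain $R$; the directed-union/gluing reduction to adjoining a single integral element $\alpha=a/b$; the check that any $x\in R$ with $bx=a$ automatically centralizes $C$ (from $b(xc-cx)=ac-ca=0$) and annihilates every $g\in C[X]$ vanishing at $\alpha$, so that $X\mapsto x$ gives an embedding of $C[\alpha]$; and the descent from $S=k(t)\tsr R$ back to $R$ by expanding over a $k$-basis of $k(t)$. All of that is routine and sound.

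The genuine gap is that the proof stops exactly where the proposition lives. The whole content of Bergman's statement is the divisibility claim: integrality of $a/b$ over $C$ forces $b$ to left-divide $a$ in $S$ (equivalently, after extracting a greatest common left factor $a=ra_2$, $b=rb_2$ in the $2$-fir $S$, that $b_2$ is a unit). Your text reduces to this and then says one ``feeds in the integral relation'' and ``exploits cancellation of one-sided factors,'' with the finite-extension hypothesis ``keeping auxiliary rings domains'' — but no argument is given, and none of this is automatic. What the cleared relation $a^n+c_{n-1}a^{n-1}b+\dots+c_0b^n=0$ gives directly is only that $b$ left-divides $a^n$; in a noncommutative $2$-fir, right comaximality of $a_2,b_2$ together with $b\mid a^n$ does not formally yield that $b_2$ is a unit the way coprimality would in a commutative GCD domain, and making the two hypotheses (that $k(t)\tsr R$ is a $2$-fir and that $R$ stays a domain under finite algebraic extensions of $k$) actually do work requires a concrete construction, not a gesture. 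You say so yourself (``the step I expect to require real work''), so as written this is a reduction plus a plan, not a proof. Be aware also that the paper you are working from does not prove this proposition either — it is quoted from Bergman's paper (Corollary 4.4 there) — so the missing step cannot be supplied by anything established elsewhere in this paper; you would need to reproduce Bergman's Section-4 argument or an equivalent.
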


The centralizer {\em group} $Z$ of an element $g\in F$, $g\neq 1$ is cyclic.  Indeed, $Z$ is a commutative subgroup of $F$, as can be seen, for example, using the Magnus-Fox embedding; and it is also free, since by Nielsen-Schreier theorem any subgroup of a free group is free.    That makes $Z$ cyclic, necessarily the maximal cyclic subgroup of $F$ which contains $g$.

The next lemma allows one to assume that a free group is finitely generated when dealing with centralizers.

\begin{lem} \label{finiteAlphabet} Suppose that $F$ is a free group of arbitrary, possibly infinite, rank.  Suppose that $G$ is a free factor of $F$, i.e.  that $F=G\!*\!H$, the free product of the free groups $G$ and $H$.  Let $\uu\in \gring{k}{G}$, $\uu\not\in k$.   Then the centralizer of $\uu$ in $\gring{k}{F}$ is contained in $\gring{k}{G}$.
\end{lem}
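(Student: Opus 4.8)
The plan is to use the free-product structure of $F = G * H$ together with a suitable degree function that "measures" the amount of $H$ appearing in a reduced expression. First, I would set up a group homomorphism $h\colon F \to \ZZ$ that records, up to sign, the way letters of $H$ interleave with letters of $G$ — for instance, after picking a basis of $H$, send every generator of $H$ to $1$ and every generator of $G$ to $0$. This $h$ need not be the right tool by itself, since an element of $\gring{k}{G}$ is already $h$-homogeneous of degree $0$; the point is rather to analyze an element $\vv$ of the centralizer that does involve $H$ and derive a contradiction from $\uu\vv = \vv\uu$ using the key multiplicativity identity~(\ref{eq:key}) for highest-degree homogeneous components.

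Concretely, suppose $\vv \in \gring{k}{F}$ commutes with $\uu$ but $\vv \notin \gring{k}{G}$. I would pass to the highest-degree homogeneous component with respect to an appropriately chosen $h$ and argue that $\vv_h$ must be supported on cosets of $G$ that genuinely involve $H$. Applying~(\ref{eq:key}) to $\uu\vv = \vv\uu$ and using that $\uu = \uu_h$ has $h$-degree $0$ (so $\uu_h = \uu$), one gets $\uu \vv_h = \vv_h \uu$, i.e. $\vv_h$ is again in the centralizer and lies in a single homogeneous component. Then I would look at the structure of a reduced word of maximal "syllable length" (alternating $G$- and $H$-syllables) appearing in $\supp(\vv_h)$: when we multiply on the left versus the right by $\uu \in \gring{k}{G} \setminus k$, the leading and trailing syllables interact with $\uu$ in incompatible ways — multiplying by $\uu$ on the left modifies the first $G$-syllable and cannot cancel against anything involving $H$, and similarly on the right — and a support/leading-term comparison (invoking that $\gring{k}{F}$ is a domain whose only units are monomials, from Section~\ref{fga}) forces $\supp(\vv_h)$ to contain no $H$-letters, contradicting the choice of $\vv$.

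An alternative, cleaner route: realize $\gring{k}{F} = \gring{k}{G} * \gring{k}{H}$ internally and note that $\gring{k}{F}$ is a free $\gring{k}{G}$-module with basis the reduced words beginning and ending in $H$-syllables (the standard normal form for group algebras of free products). Writing $\vv = \sum_w c_w\, w$ in this basis over $\gring{k}{G}$ and comparing the $\gring{k}{G}$-bimodule actions in $\uu\vv = \vv\uu$ component-by-component, the cross terms with $w \neq 1$ must separately satisfy $\uu c_w = c_w \uu$ inside a twisted bimodule where left and right $\gring{k}{G}$-actions are genuinely different, which can only happen if $c_w = 0$; hence $\vv = c_1 \in \gring{k}{G}$.

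The main obstacle I anticipate is making the "incompatible interaction" step fully rigorous: one has to show that no cancellation among the many terms of $\uu\vv$ can conspire to reproduce $\vv\uu$ unless the $H$-syllables are absent. This is where the degree function~(\ref{degmult})--(\ref{eq:ultratriangle}) and the isosceles property of Remark~\ref{rem:isosceles} should do the work — by reducing to leading homogeneous components one kills exactly the cancellations that cause trouble — but choosing the grading homomorphism $h$ so that it simultaneously detects $H$ and behaves well under the free-product normal form is the delicate point, and I would expect to spend most of the proof there.
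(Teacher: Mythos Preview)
Your second route is the more promising, but it has both a confusion and a real gap. The confusion: ``reduced words beginning and ending in $H$-syllables'' do not form a one-sided $\gring{k}{G}$-basis of $\gring{k}{F}$; rather, they index the nontrivial $(G,G)$-double cosets, and it is the \emph{bimodule} decomposition of $\gring{k}{F}$ along double cosets that lets you compare $\uu\vv$ and $\vv\uu$ component by component. Granting that, for each such $\omega$ the bimodule $k[G\omega G]$ is free of rank one over $\gring{k}{G}\otimes_k(\gring{k}{G})^{\mathrm{op}}\cong k[G\times G^{\mathrm{op}}]$, and $\uu\vv_\omega=\vv_\omega\uu$ becomes $(\uu\otimes1-1\otimes\uu)\cdot\vv_\omega=0$. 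The gap is your bare assertion ``which can only happen if $c_w=0$'': to justify it you must observe that $G\times G^{\mathrm{op}}$ is bi-orderable, hence $k[G\times G^{\mathrm{op}}]$ is a domain, so $\uu\notin k$ gives $\uu\otimes1-1\otimes\uu\neq0$ and forces $\vv_\omega=0$. Your first route, as you suspect, never gets off the ground: your $h$ does not separate $G$ from elements such as $aga^{-1}$ with $a\in H$, and ``syllable length'' is not a degree function in the sense of Definition~\ref{def:degFun}, so the leading-term machinery of Section~\ref{degFun} does not apply.

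The paper's proof is entirely different and shorter. It bi-orders $F$ (cf.\ Section~\ref{fga}) and takes $u=\min\supp(\uu)$ and $v=\min(\supp(\vv)\setminus G)$; since a bi-order is invariant under both left and right multiplication, the least element of $\supp(\uu\vv)\setminus G=\supp(\vv\uu)\setminus G$ is simultaneously $uv$ and $vu$, so $u$ and $v$ commute in $F$. Hence $u=r^m$, $v=r^n$ for some $r\in F$; from $v\notin G$ one gets $r\notin G$, and a short reduced-word computation shows every nonzero power of such an $r$ lies outside $G$, forcing $m=0$, i.e.\ $u=1$. Repeating with $\max$ in place of $\min$ gives $\supp(\uu)=\{1\}$, contradicting $\uu\notin k$. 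The single idea you are missing in both of your routes is this use of a bi-order to extract a commutation relation between \emph{individual group elements}, after which the problem becomes elementary combinatorics on reduced words.
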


\begin{proof}Suppose there is an element $\vv\in\gring{k}{F}\without kG$ which commutes with $\uu$.   We will conclude that $\uu\in k$, a contradiction.  It is enough to consider the case when $G$ and $H$ are finitely generated because we only need enough generators to express both $\uu$ and $\vv$.  

Let us bi-order the free group $F$ (cf. Section~\ref{fga}).  Denote by $u$ the lowest element in $\supp(\uu)$ and by $v$ the lowest element in $\supp(\vv)\without G$.  Since a bi-ordering is preserved by multiplication from both sides, the lowest element of $\supp(\uu\vv) \without G= \supp(\vv\uu)\without G$ is both $uv$ and $vu$, and so $u$ and $v$ commute.  That means $u=r^m$ and $v=r^n$ for some element $r\in F$.  $v\not\in G$ implies that $r\not\in G$.   We will now deduce that $m=0$.  

Let $\{g_1,\dots\}$ be a free generating set for $G$, $\{t_1,\dots\}$ a free generating set for $H$.  Working in the free generating set $\{g_1,\dots, t_1,\dots\}$ for $F$, write 
\[r= p^{-1}s p\]
 so that $\bar{p\;}^{\;-1} \;\bar{s} \;\bar{p}$ is a reduced word and $\bar{s}$ cyclically reduced, i.e. so that the first letter of $\bar{s}$ is not the inverse of the last letter.  Since $r\not\in G$, at least one of $\bar{p}$ or $\bar{s}$ must contain a letter from $\{t_1,\dots\}$.
 
Now, $r^m$ is represented by the word $\bar{p\;}^{\;-1}\; \bar{s\;}^{\;m} \;\bar{p\;}$.  If $m\neq 0$,  this word is reduced and contains a letter from $\{t_1,\dots\}$, which implies that $r^m\not\in G$. Since $r^m=u\in G$, it follows that $m=0$ and $u=1$.
Thus the lowest element in $\supp(\uu)$ is $1$.  Similarly we conclude that the highest element in $\supp(\uu)$  is $1$.  Hence $\uu\in k$, a contradiction. \end{proof}
\subsection{The complement of a regular $k$-point in a proper curve over $k$.}\label{sec:niceCurves}
Parts of the following characterization are in Propositions 2.1 and 2.2 in~\cite{Bergman}. We prove this lemma in Section~\ref{sec:proofCurves} for the sake of completeness of the exposition.  See~\cite[Sec. 4.1, in particular Exercise 1.17]{Liu} for a treatment of algebraic curves over an arbitrary field.   We denote by $\Frac C$ the field of fractions of $C$.
 
\begin{lem}\label{characterization} Let $C\neq k$ be a $k$-algebra which is a commutative integral domain.  Then the following are equivalent:
\begin{compactenum}[(i)]
\item There is a discrete degree function (cf. Definition~\ref{def:degFun})  $d$ on $C$  over $k$ which is non-negative on $C\wozero$ and which satisfies: 
  \begin{equation}\label{eq:totallyRamified}
     \left(\forall  x,y \in C\wozero\right)\quad    d(x)= d(y)                \implies \left(\exists \lambda \in k\right)\;\; d(x-\lambda y)<d(x).
\end{equation}
 \item For any $x\in C\without k$ the following holds: $x$ is transcendental over $k$,  $C$ is a finitely generated $k[x]$-module and  the valuation of $k[x]$ at $\infty$ is totally ramified in $\Frac C$.
\item There is $x\in C\without k$ for which the following holds: $x$ is transcendental over $k$, $C$ is a finitely generated $k[x]$-module and the valuation of $k[x]$ at $\infty$ is totally ramified in $\Frac C$. 
\item There is a proper reduced and irreducible curve $\thecurve$ over $k$ and a regular point $p\in \thecurve$ defined over $k$ (which implies that $\thecurve$ is geometrically reduced and irreducible and smooth at $p$) such that $C\cong \Ocal_\thecurve(\thecurve\without \{p\})$. 
\item There is a discrete valuation $v$ of $\Frac C$ over $k$ which is non-positive on $C\wozero$ and whose residue field is $k$.
\end{compactenum}
\end{lem}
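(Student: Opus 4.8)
The plan is to run the cycle of implications $(i)\Rightarrow(ii)\Rightarrow(iii)\Rightarrow(iv)\Rightarrow(v)\Rightarrow(i)$, of which $(v)\Rightarrow(i)$, $(ii)\Rightarrow(iii)$ and $(iv)\Rightarrow(v)$ are short. For $(v)\Rightarrow(i)$, put $d:=-v$ on $C$ and $d(0):=-\infty$; by Remark~\ref{rem:valuations} this is a discrete degree function on $C$ over $k$, non-negative on $C\wozero$ because $v$ is non-positive there, and if $x,y\in C\wozero$ satisfy $d(x)=d(y)$ then $x/y$ is a unit of the valuation ring of $v$, hence --- the residue field being $k$ --- congruent modulo the maximal ideal to some $\lambda\in k$, so that $d(x-\lambda y)<d(x)$, which is~(\ref{eq:totallyRamified}). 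The arrow $(ii)\Rightarrow(iii)$ needs only that $C\neq k$, so that some $x\in C\without k$ exists. For $(iv)\Rightarrow(v)$, I take $v$ to be the order of vanishing at the regular $k$-point $p$: a discrete valuation of $k(W)=\Frac C$ over $k$ with residue field $\kappa(p)=k$; it is non-positive on $C\wozero$, because an $f\in C=\Ocal_W(W\without\{p\})$ with $v(f)>0$ lies in $\Ocal_{W,p}$ and is thus regular on all of $W$, hence lies in $\Ocal_W(W)$, and $\Ocal_W(W)=k$ since $W$ is integral and proper and has a $k$-point, so that $f\in\mathfrak m_p\cap k=\{0\}$, a contradiction.

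For $(i)\Rightarrow(ii)$, note first that since $C\neq k$, condition~(\ref{eq:totallyRamified}) prevents $d$ from vanishing identically on $C\wozero$ (otherwise every nonzero element of $C$ would be a $k$-multiple of $1$, forcing $C=k$); so after rescaling I may take $d$ to be $\ZZ$-valued with $S:=d(C\wozero)$ a submonoid of $\ZZ_{\ge0}$ generating $\ZZ$, hence cofinite in $\ZZ_{\ge0}$. Fix any $x\in C\without k$ and set $e:=d(x)$; by the above $d$ is positive off $k$, so $e\geq1$ and $x$ is transcendental over $k$. For each $j\in\{0,\dots,e-1\}$ let $s_j$ be the least element of $S$ in the residue class of $j$ modulo $e$, and choose $c_j\in C$ with $d(c_j)=s_j$, taking $c_0=1$. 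I claim $C=M$ where $M:=\sum_{j=0}^{e-1}k[x]c_j$: otherwise pick $c\notin M$ of minimal $d$-degree, write $d(c)=s_j+te$ with $t\geq0$ and $j$ the class of $d(c)$, and apply~(\ref{eq:totallyRamified}) to the pair $c,\ x^tc_j$ (of equal $d$-degree) to produce $c-\lambda x^tc_j$ of strictly smaller $d$-degree, still outside $M$ --- contradicting minimality. By the isosceles property (Remark~\ref{rem:isosceles}) a nonzero $k[x]$-linear combination of the $c_j$ has a well-defined highest term, since its nonzero summands have pairwise distinct $d$-degrees modulo $e$; hence the $c_j$ form a $k[x]$-basis of $C$ and $[\Frac C:k(x)]=e$. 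Finally $-d$ extends to a discrete valuation $v$ of $\Frac C$ (with residue field $k$ by~(\ref{eq:totallyRamified}), reasoning as for $(v)\Rightarrow(i)$ in reverse), and since $v(x)=-e<0$ one has $v|_{k(x)}=e\cdot v_\infty$; thus the place $v$ lying above $v_\infty$ satisfies $e_vf_v=e\cdot1=[\Frac C:k(x)]$, so by the fundamental identity $\sum_{w\mid v_\infty}e_wf_w=[\Frac C:k(x)]$ it is the only place above $v_\infty$, i.e.\ $v_\infty$ is totally ramified in $\Frac C$. As $x$ was arbitrary, $(ii)$ holds.

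For $(iii)\Rightarrow(iv)$, write $K=\Frac C$; by $(iii)$, $K$ is a one-variable function field over $k$, so there is a unique proper regular integral curve $W_1/k$ with function field $K$ (see~\cite{Liu}). Let $\widetilde C\supseteq C$ be the integral closure of $k[x]$ in $K$, a finite $k[x]$-module by the finiteness of integral closure for finitely generated $k$-algebras; then $\Spec\widetilde C$ is the complement in $W_1$ of the set of places $w$ with $w(x)<0$, and total ramification of $v_\infty$ forces this set to be a single place $p_1$, whose residue field is $k$, so $\widetilde C=\Ocal_{W_1}(W_1\without\{p_1\})$. If $C=\widetilde C$, then $W=W_1$ and $p=p_1$ works. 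In general $\widetilde C/C$ has finite length, and I obtain $W$ by the standard pinching construction (cf.~\cite{Liu}): glue $\Spec C$ to $W_1$ along the common open subscheme on which the normalization map $\Spec\widetilde C\to\Spec C$ is an isomorphism, namely the complement in $\Spec C$ of the finite support of $\widetilde C/C$, together with the corresponding open of $W_1\without\{p_1\}$. The resulting $W$ is a proper reduced irreducible curve; the image $p$ of $p_1$ has $\Ocal_{W,p}=\Ocal_{W_1,p_1}$, a discrete valuation ring, so $p$ is a regular point with residue field $k$; and $\Ocal_W(W\without\{p\})=C$ by construction. The stated geometric consequences --- that $W$ is geometrically reduced and irreducible and smooth at $p$ --- then follow from the existence of the regular rational point $p$ (see~\cite{Liu}). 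I expect the main obstacle to be precisely this construction of $W$: checking that the pinched scheme is separated and proper over $k$ and that its ring of functions away from $p$ is exactly $C$. By contrast the algebraic implication $(i)\Rightarrow(ii)$, for all its bookkeeping with the monoid $S$, is routine once~(\ref{eq:totallyRamified}) and the isosceles property of Remark~\ref{rem:isosceles} are in hand.
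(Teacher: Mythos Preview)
Your cycle $(i)\Rightarrow(ii)\Rightarrow(iii)\Rightarrow(iv)\Rightarrow(v)\Rightarrow(i)$ matches the paper's, and your arguments for $(v)\Rightarrow(i)$, $(iv)\Rightarrow(v)$, and $(i)\Rightarrow(ii)$ are essentially the same as the paper's (the paper also chooses representatives $e_1,\dots,e_n$ of minimal degree in each residue class modulo $n=d(x)$, proves finite generation by a minimal-counterexample argument, and deduces total ramification from $e_{p/\infty}=n\ge[\Frac C:k(x)]$). Your explicit observation that the $c_j$ are a free $k[x]$-basis via the isosceles property is something the paper only records as a ``side consequence.''

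The one genuine divergence is $(iii)\Rightarrow(iv)$. You pass through the regular model $W_1$ of $K$, identify $\Spec\widetilde C=W_1\setminus\{p_1\}$, and then pinch along the conductor locus to recover $\Spec C$ inside a possibly singular proper curve $W$; you correctly flag the pinching as the delicate step. The paper avoids this detour: it sets $U=\Spec C$ and invokes directly the \emph{smallest proper completion} $W$ of $U$, obtained by adjoining to $U$ the (regular) points corresponding to the places of $\Frac C$ not centered on $U$ (citing \cite[Sec.~4.1, Exercise~1.17]{Liu}). The map $x:U\to\mathbb{A}^1_k$ extends to $\phi:W\to\mathbb{P}^1_k$, and total ramification at $\infty$ forces $\phi^{-1}(\infty)=\{p\}$ with $\kappa(p)=k$. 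The remaining point, that $U=W\setminus\{p\}$, is handled by noting that $W\setminus\{p\}$ is affine (any non-proper curve over a field is affine), that $\Ocal_W(W\setminus\{p\})\hookrightarrow C$ is finite because it sits between $k[x]$ and $C$, and hence that the open immersion $U\hookrightarrow W\setminus\{p\}$ is integral and therefore surjective. This sidesteps the normalization/pinching entirely and treats the normal and non-normal cases uniformly. Your route is correct in principle but heavier; the paper's route is what makes the argument short.
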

\begin{rem}Any $k$-algebra $C$ safisfying condition i) of the lemma is automatically a commutative integral domain~\cite[Proposition 2.1]{Bergman}.
\end{rem}
\begin{rem}\label{rem:uniquePlace}Suppose $C$ satisfies the conditions of the lemma and $d'$ is a degree function on $C$ over $k$ with at least one positive value.  Then for some $\lambda>0$, $d'\equiv \lambda d$.  This is due to the fact that $-d$ and $-d'$ both are valuations (cf. Remark~\ref{rem:valuations}) associated to the point ``at infinity" $p$.
\end{rem}
%\begin{rem}The degree function $d$ in the lemma is unique up to scaling by a positive scalar.  In other words, if \(C\) satisfies condition i) of the lemma with two degree functions $d$ and $d'$, then $d'\equiv\lambda d$ for some constant  $\lambda>0$.   This is due to the fact that $-d$ and $-d'$ both are discrete valuations (cf. Remark~\ref{rem:valuations}) associated to the point "at infinity" $p$.  \end{rem}

%%%%%%%%%%%%%%%%%%%%%%%%%%%%%%%%%%%%%%%%%%%%%%%%%%%%%%%%%%%%%%%%%%%%%%%%%%%
%
% SECTION
%
%%%%%%%%%%%%%%%%%%%%%%%%%%%%%%%%%%%%%%%%%%%%%%%%%%%%%%%%%%%%%%%%%%%%%%%%%%%
 \section{A linear algebra lemma}

\begin{lem} \label{linAlgLem} Let $U, V,Z$ be vector spaces over $k$ and $\mu: V\times U\to Z$  a bilinear map without zero divisors over $\bar k$ , i.e., such that if we denote $\mu'= \bar k\tsr\mu$, then whenever $\mu'(x,y )=0$ with $x \in \bar k\tsr U$ and $y\in \bar k\tsr V$ it follows that $x=0$ or $y=0$.
 Consider the following bilinear product induced by $\mu$:
\[*:  (U\tsr V)\times( U\tsr V) \to U \tsr Z\tsr V,\]
\[    (x\tsr y)* (z\tsr u) = x\tsr \mu(y,z)\tsr u.\]
If $t* s = s* t$ for $t,s\in U\tsr V$,  then $t, s$ are linearly dependent over $k$.
\end{lem}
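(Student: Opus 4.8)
The plan is to reduce to the case where $k$ is algebraically closed and $t,s$ are the matrices of invertible operators on one and the same space $k^r$, and then to finish with a short eigenspace argument in which the no-zero-divisor hypothesis enters through a spanning condition.

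I would begin with harmless reductions. Linear dependence of two vectors of a $k$-space is unaffected by extending the scalar field, and the hypothesis on $\mu$ is precisely that $\mu'=\bar k\tsr\mu$ has no zero divisors; since the relation $t*s=s*t$ survives $\bar k\tsr(-)$, it suffices to prove the statement with $k=\bar k$ (so that ``without zero divisors over $\bar k$'' just reads ``without zero divisors''). Because $t$ and $s$ involve only finitely many vectors of $U$ and of $V$, and $\mu$ is only ever evaluated on their spans, I would also pass to finite-dimensional $U,V$ and to a finite-dimensional subspace of $Z$ containing the relevant values; the no-zero-divisor property is inherited. Finally, if $t=0$ or $s=0$ there is nothing to prove, so assume $t\neq 0\neq s$.

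Next I would confine $t$ and $s$ to a square block on which $t$ is invertible. For $w\in U$ set $M_w=\mathrm{id}_U\tsr\mu(\,\cdot\,,w)\colon U\tsr V\to U\tsr Z$, and for $w\in V$ set $N_w=\mu(w,\,\cdot\,)\tsr\mathrm{id}_V\colon U\tsr V\to Z\tsr V$; each is injective when $w\neq0$, since the relevant slice of $\mu$ is then injective and tensoring with an identity preserves injectivity. Contracting $t*s=s*t$ against a functional on the outer $V$-factor yields $M_{s_\psi}(t)=M_{t_\psi}(s)$ for all $\psi\in V^*$, where $t_\psi=(\mathrm{id}_U\tsr\psi)(t)\in U$ and similarly for $s$; contracting against a functional on the outer $U$-factor yields $N_{t^\phi}(s)=N_{s^\phi}(t)$ for all $\phi\in U^*$, where $t^\phi=(\phi\tsr\mathrm{id}_V)(t)\in V$. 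Writing $t=\sum_{i=1}^{r}x_i\tsr y_i$ in reduced (rank $r$) form and completing $\{y_i\}$ to a basis of $V$ with dual functionals $\psi_j$, one has $t_{\psi_j}=0$ for $j>r$, so the first identity forces $M_{s_{\psi_j}}(t)=0$ and hence, by injectivity, $s_{\psi_j}=0$ for $j>r$; thus $s\in U\tsr V_0$ with $V_0=\mathrm{span}(y_1,\dots,y_r)$. The symmetric argument with the second identity and the $N_w$ gives $s\in U_0\tsr V$ with $U_0=\mathrm{span}(x_1,\dots,x_r)$. Hence $t,s\in U_0\tsr V_0$ with $\dim U_0=\dim V_0=r$, and $t$ has rank $r$ there; after restricting to $U_0,V_0$ and applying a change of basis in $U_0$ I may assume $U=V=k^r$ and $t=\sum_{a=1}^{r}e_a\tsr f_a$, the ``identity tensor''.

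Now put $\gamma_{ab}=\mu(f_a,e_b)$, let $\Gamma=(\gamma_{ab})\in M_r(Z)$, and let $S\in M_r(k)$ be the matrix of $s$. Contracting $t*s=s*t$ against the two outer basis factors turns it, since $t$ is the identity tensor, into the relation $\Gamma S=S\Gamma$, the products being the evident $Z$-valued matrix products. Fix a decomposition $\Gamma=\sum_\ell\zeta_\ell\tsr\Gamma_\ell$ with $\zeta_\ell\in Z$ linearly independent and $\Gamma_\ell\in M_r(k)$; then $\Gamma S=S\Gamma$ is equivalent to $S$ commuting with every $\Gamma_\ell$. Read in coordinates, the no-zero-divisor hypothesis says that the bilinear form $(y,z)\mapsto y^{\top}\Gamma z$ on $k^r\times k^r$ has no zero divisors; in particular, for every $z\neq 0$ the vectors $\{\Gamma_\ell z\}_{\ell}$ span $k^r$ (otherwise a nonzero $y$ would kill all of them, whence $y^{\top}\Gamma z=\sum_\ell\zeta_\ell\,(y^{\top}\Gamma_\ell z)=0$). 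Since $k=\bar k$, $S$ has an eigenvalue $\lambda$; its eigenspace $E\neq 0$ is invariant under each $\Gamma_\ell$, because these commute with $S$ and hence with $S-\lambda I$. So for any $0\neq z\in E$ we get $k^r=\mathrm{span}\{\Gamma_\ell z\}_{\ell}\subseteq E$, forcing $E=k^r$, i.e. $S=\lambda I$; thus $s=\lambda t$, and $t,s$ are linearly dependent over $k$.

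The step I expect to cost the most care is the third one: selecting the right partial contractions and exploiting the injectivity of the slice maps $M_w,N_w$ to trap $t$ and $s$ inside a common square block on which $t$ is invertible. Once that normalization is secured the conclusion is short; its only real content is the dictionary ``$\mu$ has no zero divisors'' $\iff$ ``$\{\Gamma_\ell z\}_{\ell}$ spans $k^r$ for every $z\neq 0$'', which is exactly what forbids $S$ a proper invariant subspace.
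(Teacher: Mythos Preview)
Your proof is correct, and it shares the paper's overall shape---reduce to $k=\bar k$ and finite dimensions, confine $s$ to the block $U_t\otimes V_t$ determined by a rank decomposition of $t$, then finish with an eigenvalue argument---but the two steps after the reduction are carried out differently. For the confinement, the paper picks a basis of $U_s+U_t$ adapted to both tensors and compares coefficients in $t*s$ and $s*t$ directly, obtaining the \emph{equality} $U_s=U_t$, $V_s=V_t$; you instead contract against functionals and use injectivity of the slice maps $M_w,N_w$, which yields only the inclusion $s\in U_t\otimes V_t$. That inclusion is all you need, because your endgame is different: having normalized $t$ to the identity tensor, you rewrite $t*s=s*t$ as the $Z$-valued matrix identity $\Gamma S=S\Gamma$, expand $\Gamma=\sum_\ell \zeta_\ell\,\Gamma_\ell$, and read the no-zero-divisor hypothesis as the spanning condition ``$\{\Gamma_\ell z\}_\ell$ spans $k^r$ for every $z\neq 0$''; an eigenspace of $S$ is then $\Gamma_\ell$-invariant and hence all of $k^r$, so $S=\lambda I$. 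The paper's endgame instead forms $s'=s-\lambda t$ for an eigenvalue $\lambda$ of the matrix of $s$, observes $V_{s'}\subsetneq V_t$, and invokes the previously proved \emph{equality} $V_{s'}=V_t$ (for nonzero commuting tensors) to force $s'=0$. Your route makes the role of the hypothesis especially transparent (it becomes an irreducibility/spanning condition for the family $\{\Gamma_\ell\}$) and lets you get away with only the inclusion in the confinement step; the paper's route is more elementary but needs the full equality and a bit more basis bookkeeping. One cosmetic slip: your closing paragraph calls the confinement step ``the third one,'' but it is your second.
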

The following proof is due to the referee.  A proof using duality is also possible.
\begin{proof}%We have two proofs, but they start the same way. 

 Since linear dependence of $t,s$ over $\bar k$ is equivalent to their dependence over $k$, we can assume without loss of generality that $k=\bar k$.  Assume also without loss of generality again that the vector spaces $U$ and $V$ are finite-dimensional.

Given $s\in U\otimes V$, one can write 
\begin{equation}\label{eq:tensor}
s = \sum_1^n u_i\otimes v_i
\end{equation}
with $u_1,\dots,u_n$ linearly independent, and $v_1,\dots,v_n$
likewise linearly independent; and though that expression is not
unique, the subspace $U_s$ of $U$ spanned by $u_1,\dots,u_n$, and
the subspace $V_s$ of $V$ spanned by $v_1,\dots,v_n$ can be shown to be
unique.  In (\ref{eq:tensor}), any basis of $U_s$ can be used as $(u_i)$, and once $(u_i)$ are chosen, there is only one tuple of $(v_i)$ which satisfies the equation.  The common dimension $n$ of $U_s$ and $V_s$ is the rank of the element $s$
in the tensor product space $U\otimes V$.

Now suppose $t*s = s*t$, with $s$ and $t$ nonzero.  We claim that
$U_s = U_t$ and $V_s = V_t$.  For if these are not true, assume
without loss of generality that $U_s \not\subseteq U_t$.  Let us
construct a basis of $U_s + U_t$ by starting with a basis of $U_s
\cap U_t$, taking bases of $U_s$ and $U_t$ which contain it, and
forming their union.  In particular, by the above "$\not\subseteq$"
condition, the basis of $U_s$ will contain some element $x$ not in
the basis of $U_t$.

 We will now express the common value of $t*s =
s*t$ in terms of the above basis of $U_s + U_t$ and arbitrary
bases for $Z$ and $V$, and will find that the expression for $s*t$ contains
terms $x\otimes \dots$ for the $x$ referred to above, but the
expression for $t*s$ contains no such terms.  That will be a contradiction. 

Let $\{u_i\}$ be the basis for $U_s+U_t$ just constructed.  Thus $x\in\{u_i\}$.   Let $\{v_j\}$ be a basis for $V$.
We can now express $s,t$ in terms of these bases grouping terms as follows:
\[s=\sum_i u_i\tsr\left(\sum_j\alpha_{ij} v_j\right),\quad t=\sum_l \left(\sum_k \beta_{kl} u_k\right) \tsr v_l\]
Now let us calculate
%\[s\tsr t= \sum_{il} u_i\tsr\left(\sum_j\alpha_{ij}v_j\right)\tsr\left(\sum_k\beta_{kl}u_k\right)\tsr v_l\]
\[s*t=\sum_{il} u_i\tsr\mu\!\!\left(\sum_j\alpha_{ij}v_j,\sum_k\beta_{kl}u_k\right)\tsr v_l.\]
%(I am only sketching; details would need to be shown.  In particular, in
%verifying that $s*t$ involves terms of the indicated sort the facts
%that $t\neq 0$ and $\mu$ has no zero divisors are used.)
Since $t\neq 0$, for some value of $l$ the second argument of $\mu$ is non-zero in the above.  When $u_i=x$, the first argument of $\mu$ is also non-zero.
Since $\mu$ has no zero-divisors the value of $\mu$ for those arguments is non-zero.  It follows that regardless of the choice of basis for $Z$, $s*t$ will contain a tensor element of the type $x\tsr z\tsr v_l$ with a nonzero coefficient.  On the other hand, repeating this analysis with the roles of $s$ and $t$ switched, we see that the same is not true of $t*s$.

So $U_s = U_t$ and $V_s = V_t$.  Let us write $t = \sum_1^n
u_i\otimes v_i$ where $\{u_i\}$ and $\{v_i\}$ are appropriate bases
of these subspaces.  We can express $s$ using these same bases as $s
= \sum_{i=1}^n u_i\otimes (\sum_j \alpha_{ji}) v_i$, where
$((\alpha_{ij}))$ is a matrix over $k$.  (The matrix will actually
be invertible, because the elements $\sum_j \alpha_{ji} v_i$ in the
expression for $s$ must form a basis of $V_s$; but we won't need this
fact.)  Now since $k$ is algebraically closed, this matrix has an
eigenvalue $\lambda$.  Defining $s' = s - \lambda t$, we get
\(s'=\sum\limits_{i=1}^n u_i\tsr\sum_j(\alpha_{ij}-\lambda\delta_{ij})v_j\).
Now $V_{s'}$ is spanned by the vectors $ \sum_j(\alpha_{ij}-\lambda\delta_{ij})v_j$ ($i=1,\dots n$).   Since the matrix  $((\alpha_{ij}-\lambda\delta_{ij}))$ is singular, $V_{s'}\subsetneq V_s=V_t$.  
Thus $V_{s'}\neq V_t$, and of course  $s'$ again satisfies $s' * t = t * s'$.   If $s'\neq 0$, this would
contradict the result of the two preceding paragraphs.  So $s' = 0$;
i.e., $s = \lambda t$.
\end{proof}

%%%%%%%%%%%%%%%%%%%%%%%%%%%%%%%%%%%%%%%%%%%%%%%%%%%%%%%%%%%%%%%%%%%%%%%%%%%
%
% SECTION
%
%%%%%%%%%%%%%%%%%%%%%%%%%%%%%%%%%%%%%%%%%%%%%%%%%%%%%%%%%%%%%%%%%%%%%%%%%%%
\section{Commuting homogeneous elements}
The terminology and notation used in this section are introduced in Sections~\ref{fga} and~\ref{degFun}.  
\begin{defn}We say a product $w_1\cdot\ldots\cdot w_n$, where $w_i\in F$, is a {\em reduced factorization} or a {\em reduced product}  if no $w_i$ equals $1$, and the {\em word} $\bar{w_1}\cdot\ldots\cdot\bar{w_n}$ is reduced, in other words, if no cancellation takes place at the juncture of $\bar{w_i}$ and $\bar{w_{i+1}}$ for $i=1,\dots,n-1$.  In this section, we reserve the notation ``$\rprod$" for {\em reduced} products in $F$.

For $z,w\in F$, we say that $z$ is a prefix (suffix) of $w$ whenever the {\em word} $\bar z$ is a prefix (resp. suffix) of the {\em word} $\bar w$, or equivalently, whenever $z=1$, $z= w$, or for some $v\in F$, there is a reduced factorization $w=z\cdot v$ (resp.  $w=v\cdot z$).  By $z\leq w$ ($z< w$) we mean that $z$ is a prefix (resp. proper prefix) of $w$.     
\end{defn}
\begin{obs}
If $n<m$, and $v_1\rprod\hdots\rprod v_n$ and $v_n\rprod\hdots\rprod v_m$ are reduced factorizations, so is
\[v_1\rprod\hdots\rprod v_m.\]

\end{obs}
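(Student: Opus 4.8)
The plan is to isolate the single elementary fact that does all the work, namely: for elements $w_1,\dots,w_r\in F$ none of which is $1$, the product $w_1\rprod\hdots\rprod w_r$ is a reduced factorization if and only if each consecutive sub-product $w_i\rprod w_{i+1}$, $i=1,\dots,r-1$, is a reduced factorization. First I would establish this equivalence. Viewing $\bar{w_1}\hdots\bar{w_r}$ as a word in $\Alphabet^{\pm1}$, reducedness means that no two adjacent letters of it are mutually inverse. Any two adjacent letters either both lie inside a single block $\bar{w_i}$ --- impossible, since each $\bar{w_i}$ is reduced --- or consist of the last letter of $\bar{w_i}$ and the first letter of $\bar{w_{i+1}}$ for a unique $i\in\{1,\dots,r-1\}$ (every block is nonempty because $w_i\neq1$). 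Hence $\bar{w_1}\hdots\bar{w_r}$ is reduced precisely when no cancellation occurs at any of the $r-1$ junctures, which is exactly the asserted equivalence.

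Granting this, the observation becomes pure bookkeeping of indices. From the hypothesis that $v_1\rprod\hdots\rprod v_n$ is a reduced factorization, the equivalence gives that $v_i\rprod v_{i+1}$ is a reduced factorization for every $i\in\{1,\dots,n-1\}$; from the hypothesis that $v_n\rprod\hdots\rprod v_m$ is a reduced factorization it gives the same for every $i\in\{n,\dots,m-1\}$. Since the two given factorizations overlap in the common factor $v_n$, these two index sets together exhaust $\{1,\dots,m-1\}$, so $v_i\rprod v_{i+1}$ is a reduced factorization for every $i=1,\dots,m-1$; moreover $v_i\neq1$ for all $i$ because this holds in each of the two given factorizations. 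Applying the equivalence in the reverse direction (with $w_j=v_j$ and $r=m$) yields that $v_1\rprod\hdots\rprod v_m$ is a reduced factorization, as claimed.

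I do not anticipate a genuine obstacle: the entire content lies in unwinding the definition of ``reduced factorization'', and the only point deserving a moment's attention is that the single shared factor $v_n$ is precisely what makes the index ranges $\{1,\dots,n-1\}$ and $\{n,\dots,m-1\}$ abut and cover $\{1,\dots,m-1\}$ --- equivalently, every juncture of $\bar{v_1}\hdots\bar{v_m}$ already appears as a juncture of $\bar{v_1}\hdots\bar{v_n}$ or of $\bar{v_n}\hdots\bar{v_m}$, so no cancellation can arise there. One could phrase the same proof with word lengths: reducedness of the two given factorizations says $\lvert\overline{v_1\cdots v_n}\rvert=\sum_{i=1}^n\lvert\bar{v_i}\rvert$ and $\lvert\overline{v_n\cdots v_m}\rvert=\sum_{i=n}^m\lvert\bar{v_i}\rvert$, and these splice along the common block $\bar{v_n}$ to give $\lvert\overline{v_1\cdots v_m}\rvert=\sum_{i=1}^m\lvert\bar{v_i}\rvert$, which is exactly reducedness of the full product.
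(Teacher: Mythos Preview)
Your argument is correct. The paper states this as an Observation with no accompanying proof, treating it as immediate from the definition; your write-up simply makes explicit the one-line reason---reducedness is a local condition at each juncture, and every juncture of $\bar{v_1}\cdots\bar{v_m}$ already occurs in one of the two given factorizations---which is exactly the intended justification.
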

\begin{obs}\label{obs:red} If $v_1\rprod\hdots\rprod v_n$ and $w_1\rprod\hdots\rprod w_m$ are reduced factorizations and neither $\bar{v_n}$ nor $\bar{ w_1}$ cancel completely in $\bar{v_n}\bar{ w_1}$, then 
\[ v_1\rprod\hdots\rprod v_{n-1}\rprod(v_nw_1)\rprod w_2\rprod\hdots\rprod w_m\]
is a reduced factorization when $v_n w_1$ is regarded as one factor.  \end{obs}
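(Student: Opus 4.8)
The plan is a direct piece of bookkeeping tracking which letters of $\bar{v_n}$ and $\bar{w_1}$ survive the cancellation in the product $v_nw_1$. Write the reduced words $\bar{v_n}=x_1\cdots x_a$ and $\bar{w_1}=y_1\cdots y_b$, and let $c\ge 0$ be the number of letters cancelled when the concatenation $\bar{v_n}\,\bar{w_1}$ is reduced, i.e.\ the largest integer with $x_{a}=y_1^{-1},\ x_{a-1}=y_2^{-1},\ \dots,\ x_{a-c+1}=y_c^{-1}$. The hypothesis that neither $\bar{v_n}$ nor $\bar{w_1}$ cancels completely says exactly that $c<a$ and $c<b$. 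Hence
\[
\overline{v_nw_1}=x_1\cdots x_{a-c}\,y_{c+1}\cdots y_b ,
\]
both blocks on the right are nonempty, and this word is already reduced: the first block is a prefix of the reduced word $\bar{v_n}$, the second a suffix of the reduced word $\bar{w_1}$, and $x_{a-c}\ne y_{c+1}^{-1}$ by maximality of $c$. In particular $v_nw_1\ne 1$, the first letter of $\overline{v_nw_1}$ is the first letter $x_1$ of $\bar{v_n}$, and its last letter is the last letter $y_b$ of $\bar{w_1}$.

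With this description in hand I would verify, juncture by juncture, that no cancellation occurs in the word
\[
\bar{v_1}\cdots\bar{v_{n-1}}\cdot\overline{v_nw_1}\cdot\bar{w_2}\cdots\bar{w_m}
\]
associated to the proposed factorization. The junctures internal to $\bar{v_1},\dots,\bar{v_{n-1}}$ and to $\bar{w_2},\dots,\bar{w_m}$ are cancellation-free because $v_1\rprod\cdots\rprod v_n$ and $w_1\rprod\cdots\rprod w_m$ are reduced factorizations. At the juncture between $\bar{v_{n-1}}$ and $\overline{v_nw_1}$ (which occurs only when $n>1$) the incoming first letter is $x_1$, the first letter of $\bar{v_n}$, and this does not cancel against the last letter of $\bar{v_{n-1}}$ precisely because $v_1\rprod\cdots\rprod v_n$ is reduced; symmetrically, at the juncture between $\overline{v_nw_1}$ and $\bar{w_2}$ (which occurs only when $m>1$) the outgoing last letter is $y_b$, the last letter of $\bar{w_1}$, which does not cancel against the first letter of $\bar{w_2}$ because $w_1\rprod\cdots\rprod w_m$ is reduced.

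Finally, each factor in $v_1\rprod\cdots\rprod v_{n-1}\rprod(v_nw_1)\rprod w_2\rprod\cdots\rprod w_m$ is a reduced word, so a concatenation of them with no cancellation at any juncture has no two mutually inverse adjacent letters and is therefore reduced; since moreover $v_nw_1\ne 1$ and each $v_i,w_j\ne 1$, this is a reduced factorization. I do not expect any real obstacle: the content is entirely the description of $\overline{v_nw_1}$ in the first paragraph, and the only point that requires a word of care is the degenerate cases $n=1$ or $m=1$, in which one or both of the outer junctures above are simply absent and the argument is unchanged.
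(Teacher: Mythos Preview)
Your proof is correct and follows the same idea as the paper's: the key observation is that the first letter of $\overline{v_nw_1}$ is the first letter of $\bar{v_n}$ and its last letter is the last letter of $\bar{w_1}$, so the junctures with $\bar{v_{n-1}}$ and $\bar{w_2}$ remain cancellation-free. The paper states this in a single sentence, while you have spelled out the bookkeeping in full (including the degenerate cases $n=1$ or $m=1$), but the content is identical.
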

Indeed, the first letter of $\bar{v_nw_1}$ is the same as the first letter of $\bar v_n$ while the last letter of $\bar{v_nw_1}$ is the same as the last letter of $\bar w_1$.

We intend to capitalize on this observation by introducing certain sets of homogeneous elements of $\AA$ in the following definition.  %nvolves bounds on $h$-degrees of prefixes of members of the support.  
Although these sets, denoted by $T(r)$, may seem quite restricted, we shall find that for every $\ww\in\AA\wozero$ homogeneous of positive degree, all sufficiently large powers $\ww^n$ belong to $T(r)$ for appropriate values of $r$; and this will allow us to apply the properties of $T(r)$ in studying commuting elements.
\begin{defn} \label{T} Let $r>0$.  Let
\begin{itemize}\item  $T_1(r)$ be the set of homogeneous elements $\ww\in\AA$ for which
\[h(\ww)\geq 2\left(r+ \max\{\;\; \abs{h(a)}\; \big|\; a\in\Alphabet \}\right); \]
\item
$T_2(r)$ be the set of homogeneous elements $\ww\in\AA$ for which
\[ \qquad h(\ww)>0 \mbox{ and } \left(\forall w\in  \supp(\ww) \;\;\;\forall p\leq w\right)\;\;\;\;\;\; -r < h(p) < h(\ww)+r;\]
\item $T(r)=T_1(r)\cap T_2(r)$.
\end{itemize}
We will omit $r$ from this notation when there is no risk of confusion.  
\end{defn}
%________________________
\begin{figure}[htp]
    \centering
    \begin{subfigure}{0.6032\textwidth}
   %   \centering
      \includegraphics[width=\textwidth]
      {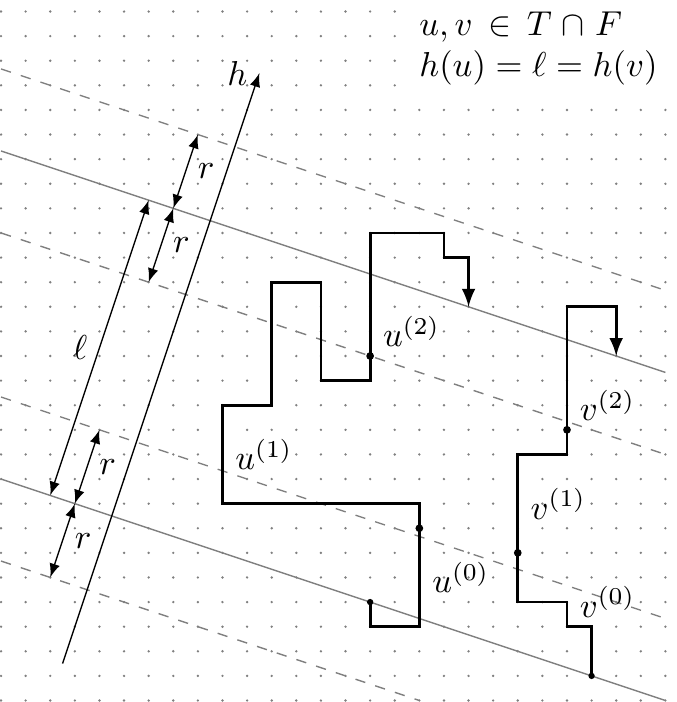}
      \caption{}
    \end{subfigure}\quad
    \begin{subfigure}{0.364\textwidth}
   %   \centering
      \includegraphics[width=\textwidth]
      {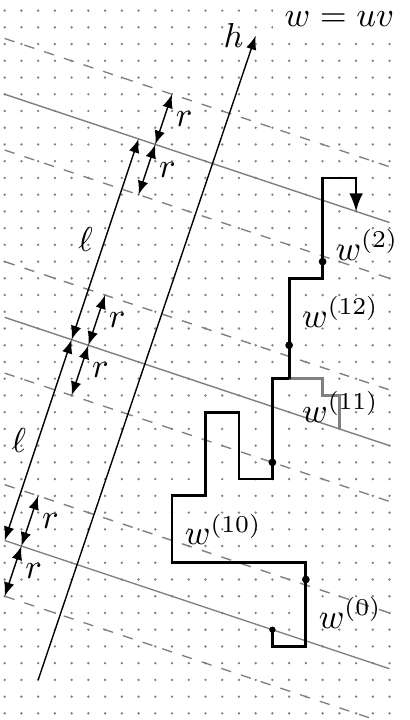}
      \caption{}
    \end{subfigure}
  \caption {Factorization of elements of $T\cap F$ when $\mbox{rank}\,F=2$. The group elements are depicted as walks in $F\ab$.} \label{fig:factorization}
\end{figure}

%__________________________________________

Figure~\ref{fig:factorization}\textsc{a} illustrates elements of $T\cap F$.   In that diagram, $F$ is free on generators $a$ and $b$.  In each path shown, every rightward, leftward, upward or downward $1$-unit step represents a factor $a$, $a^{-1}$, $b$, $b^{-1}$ respectively.  Moving along a path towards the arrowhead corresponds to reading the factors from left to right.  The function $h$ is represented by perpendicular projection onto the slanting line at the left.  The factorizations $u= u\zero\cdot u\one\cdot u\two$ and $v= v\zero\cdot v\one\cdot v\two$ to be discussed later in this section are shown by dots in the paths separating the indicated factors.  For visual clarity, the two expressions $u$ and $v$ are shown as paths with different initial points, so that their diagrams do not overlap, but both initial points are taken on the line $h=0$.

Let us fix an $r>0$.  We claim that the sets $T_1$, $T_2$, and $T$ are closed under multiplication.  It is obvious that  $T_1$ is closed under multiplication.  Let us show now that the same holds for $T_2$ .  Suppose   $\uu,\vv\in T_2$.   We want to show that $\uu\vv\in T_2$.  Let $u\in\supp(\uu)$ and $v\in\supp(\vv)$ and $p\leq uv$.   There are two possibilities:  $p\leq u$ or $p=uq$ where $q\leq v$.  In the first case we have
\[-r<h(p)<h(\uu)+r<h(\uu\vv)+r\]
while in the second
\[-r<h(q)<h(uq)=h(\uu)+h(q)< h(\uu)+ h(\vv)+r=h(\uu\vv)+r.\]
Thus $\uu\vv\in T_2$.  Lastly, it follows that $T$ is closed under multiplication as well.

Our next goal is to obtain certain reduced factorizations of  elements of $T\cap F$.  

Define
\begin{align*}
\Ocal&= \{ \;w \in F \;| \; h(w) \geq r,\;\;\;\;\ \left(\forall p <w\right)\;\;\;\; \abs{h(p)} <r  \;\}\\
\widetilde\Ocal&= \{ \;w \in F\; | \;  h(w) \geq r,  \;\;\left(\forall\mbox{ proper suffix } s \mbox{ of } w\right)\;\;\;\;\abs{h(s)} <r\;\}.
\end{align*}
It helps to notice that $\widetilde\Ocal$ is the reversal of $\Ocal$ and so statements about $\Ocal$ translate into dual statements about $\widetilde\Ocal$.  By comparing $h$-degrees we get these 
%\smallskip
\begin{lem}[Properties of $\Ocal$ and $\widetilde\Ocal$]\label{propO}$ $
%\textsc{Properties of $\Ocal$ and $\widetilde\Ocal$}:
\begin{itemize}
\item No element in $\Ocal$ is a prefix of another element in $\Ocal$ 
\item No element in $\widetilde\Ocal$ is a suffix of another element in $\widetilde\Ocal$
%\quad(dual to the previous property)
\item No element in $\Ocal^{-1}:=\{u^{-1}\;|\; u\in\Ocal\}$ is a suffix of an element in $\widetilde\Ocal$
\item No element in $\widetilde\Ocal^{-1}:=\{v^{-1}\;|\; v\in\widetilde\Ocal\}$ is a prefix of an element in $\Ocal$
%\quad(dual to the previous property)
\end{itemize}
The last two properties can be equivalently stated as:
\begin{itemize}
\item  For any $u \in\Ocal$  and $v\in \widetilde\Ocal$, neither $\bar{u}$ nor $\bar{v}$ cancels completely in the product $\bar{vu}$
\end{itemize}
\end{lem}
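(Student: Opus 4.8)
The plan is to read off all four bulleted statements directly from the defining $h$-degree inequalities of $\Ocal$ and $\widetilde\Ocal$, using only that $h\colon F\to(\RR,+)$ is a group homomorphism, so that $h(w^{-1})=-h(w)$ for every $w\in F$. Recall also, as noted in the text, that $\widetilde\Ocal$ is the word-reversal of $\Ocal$, which lets each assertion about $\Ocal$ be transferred to its dual about $\widetilde\Ocal$.

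First I would prove the first two bullets; by the reversal duality it suffices to treat $\Ocal$. If $u,w\in\Ocal$ are distinct with $u\leq w$, then $u<w$, so the defining condition of $\Ocal$ (applied to the proper prefix $u$ of $w$) gives $|h(u)|<r$, whereas $u\in\Ocal$ forces $h(u)\geq r$ --- a contradiction. Reversing all words yields the second bullet.

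Next I would treat the third and fourth bullets, again dual to one another. Let $u\in\Ocal$ and $v\in\widetilde\Ocal$, and suppose $u^{-1}$ is a suffix of $v$. Since $h(u^{-1})=-h(u)\leq-r$, we have $h(u^{-1})<0$ and $|h(u^{-1})|\geq r$, and $u^{-1}\neq1$ because $h(u)\geq r>0$. If $u^{-1}=v$, then $h(v)<0$ contradicts $h(v)\geq r$; if $u^{-1}$ is a proper suffix of $v$, then $|h(u^{-1})|<r$ by the definition of $\widetilde\Ocal$, again a contradiction. This is the third bullet; the fourth follows by the identical computation with prefix/suffix and $\Ocal$/$\widetilde\Ocal$ swapped.

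Finally I would record the equivalence with the last displayed item by unwinding definitions: in the word $\bar{vu}$, ``$\bar u$ cancels completely'' holds precisely when $\bar u^{-1}$ is a suffix of $\bar v$, i.e.\ when $u^{-1}$ is a suffix of $v$ --- exactly what the third bullet rules out; and ``$\bar v$ cancels completely'' holds precisely when $v^{-1}$ is a prefix of $u$ --- exactly what the fourth bullet rules out. Quantifying over $u\in\Ocal$, $v\in\widetilde\Ocal$, the last item is therefore the conjunction of the third and fourth bullets. There is no real obstacle in any of this; the only points needing a moment's care are the improper cases $u^{-1}=v$ and $v^{-1}=u$ and the translation of ``cancels completely'' into the prefix/suffix language, with the sign identity $h(w^{-1})=-h(w)$ doing the rest.
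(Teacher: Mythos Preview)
Your proposal is correct and follows essentially the same approach as the paper: comparing $h$-degrees directly against the defining inequalities for $\Ocal$ and $\widetilde\Ocal$, and invoking the reversal duality for the even-numbered bullets. The only differences are cosmetic---the paper handles the third bullet in one line by noting that \emph{any} suffix (proper or not) of an element of $\widetilde\Ocal$ has $h$-value strictly greater than $-r$, whereas you split into the two cases $u^{-1}=v$ and $u^{-1}$ a proper suffix---and you additionally spell out the equivalence with the final ``cancels completely'' reformulation, which the paper simply asserts.
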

\begin{proof}$ $
\begin{itemize}
\item If $v, w\in\Ocal$ and $p< v$,  then $h(w)\geq r$ but $h(p)<r$, hence $w\neq p$. 
\item Dual to the previous one.
\item If $t\in \Ocal^{-1}$, then $h(t)\leq -r$.  But if $s$ is a suffix, proper or not, of an element of $\widetilde{\Ocal}$, then $h(s)>-r$.  Hence $t\neq s$.
\item  Dual to the previous one.
\end{itemize}
\end{proof}
Let $w\in T\cap F$.  Define $w\zero$ to be the prefix of $w$ of the shortest word length such that $h(w\zero)\geq r$.  Likewise define $w\two$ to be the suffix of $w$ of the shortest word length with $h(w\two)\geq r$.  By the definition of $T_1$, $w\zero$ and $w\two$ exist.  We claim that there is a gap between them, i.e. that there is a $w\one\in F\without\{1\}$,  for which $w=w\zero\rprod w\one\rprod w\two$ is a reduced factorization (Figure~\ref{fig:factorization}\textsc{a}).   

For otherwise $s=\left(w\zero\right)^{-1}\!\!\!w$ would be a suffix of $w\two$ and then by the definition of $w\two$,
\[h(s)< r + \max\{\; \abs{h(a)} \big| a\in A\}.\]  
On the other hand, by the definition of $w\zero$, 
\[h(w\zero)< r+ \max\{\; \abs{h(a)} \big| a\in A\}.\]
  Putting these together we get 
  \[h(w)= h(w\zero s)= h(w\zero)+h(s)< 2\left(r+ \max\{\; \abs{h(a)} \big| a\in A\}\right),\]
   a contradiction with the definition of $T_1$.  
   
   Since $w\in T_2$, it follows from the definition of $T_2$ that $w\zero\in \Ocal$ and $w\two\in \widetilde{\Ocal}$.  Finally it is evident from the first two Properties of $\Ocal$ and $\widetilde{\Ocal}$ in Lemma~\ref{propO}, that the reduced factorization of $w$ we have constructed,  
\begin{equation}\label{eq:fact}
w= w\zero\rprod w\one\rprod w\two,
\end{equation}
is the only one with the property that 
\[w\zero\in \Ocal,\; w\two\in \widetilde\Ocal.\]
%such that $ w\zero\in \Ocal$ and $w\two\in \widetilde\Ocal$.  

Now take two elements $u,v\in F\cap T$.  We have the reduced factorizations $u\zero\rprod u\one\rprod u\two$ and $v\zero\rprod v\one\rprod v\two$.   Next, by the last Property of $\Ocal$ and $\widetilde{\Ocal}$, neither $\bar{u\two}$ nor $\bar{v\zero}$ cancel completely in $\bar{u\two v\zero}$.
Therefore, by Observation~\ref{obs:red}, we have
% the first letters of $\bar{u\two v\zero}$ and $\bar{u\two}$ are the same and the last letters of $\bar{u\two v\zero}$ and $\bar{v\zero}$ are the same as well.  
%This implies 
the following reduced factorization of $uv$ (into $5$ factors with $u\two v\zero$ as one):

\begin{equation}\label{redfactw}
uv=u\zero \rprod u\one\rprod \left(u\two v\zero\right)\rprod v\one \rprod v\two.
\end{equation}
Since $u\zero\in\Ocal$ and $v\two\in\widetilde{\Ocal}$, it follows from uniqueness of~(\ref{eq:fact}) with $w=uv$, that 
\begin{equation}\label{prefsuf}
(uv)\zero = u\zero, \quad(uv)\one=u\one\rprod \left(u\two v\zero\right)\rprod v\one, \quad \mbox{and}\quad(uv)\two= v\two.
\end{equation}
Moreover, we have
\begin{lem-def} \label{Tlemma}Let $u,v\in F\cap T$, $w=uv$, and $\ell=h(u)$.
\begin{enumerate}[(a)]
\item Define the following elements:
\[w\onezero=u\one,\quad w\oneone=u\two v\zero,\quad w\onetwo=v\one.\]
\item $w\onezero$, $w\oneone$, and $w\onetwo$ depend only on $w$ and $\ell$.  In other words, if $f, g \in F\cap T$ satisfy $fg= w$ and $h(f)=\ell$, then 
\[f\one=u\one,\quad f\two g\zero=u\two v\zero,\quad g\one=v\one.\]
\item We have a reduced factorization:
\[w\one= w\onezero\rprod w\oneone\rprod w\onetwo.\] 
\end{enumerate}
\end{lem-def}
%\begin{figure}             %____________
%\includegraphics{walks-illustration}                           %_
%\caption{Factorization of elements of $T\cap F$ when $F$ is of rank $2$.} \label{fig:factorization}     %_
%\end{figure} %_________________________________
\begin{proof} (b):  Let $p= u\zero u\one = w\zero w\onezero$ and $q=u v\zero= w\zero w\onezero w\oneone$.  The factorization (\ref{redfactw}) shows that $p<q<w$.  Now we observe that ``$w$ crosses the interval $(\ell-r,\ell+r)$ only once; the crossing begins with $p$ and ends with $q$" (Figure~\ref{fig:factorization}\textsc{b}).   To be precise,
\begin{align*}
&\mbox{this holds...} & \mbox{because...}&\\
t< p &\implies h(t)<\ell+r,       &\mbox{( }t< u \mbox{ and } u\in T_2)&\\
t=p & \implies h(t)\leq\ell-r,    & \mbox{(} u=p\rprod u\two \mbox{ and } u\two\in\widetilde\Ocal)&\\
% p<t<q &\implies \ell-r< h(t) <\ell+r & (\mbox{by definition of } \Ocal \mbox{ and } \widetilde\Ocal)\\
p<t<q &\implies \ell-r< h(t) <\ell+r, & \mbox{ (either } t^{-1}u \mbox{ is a proper suffix of } u\two\in \widetilde{\Ocal}&\\
&  &\mbox{ or } u^{-1}t < v\zero\in\Ocal)&\\
 t=q & \implies h(t)\geq \ell+r, & \mbox{( } q= uv\zero \mbox{ and } v\zero\in \Ocal)&\\
 t> q &\implies h(t)> \ell-r,  &  \mbox{( }u^{-1}t \leq v \mbox{ and } v\in T_2.)&\\
 \end{align*}
 
One can see from these conditions that $p$ is the prefix of $w$ of the longest word length which satisfies $h(p)\leq \ell-r$, while $q$ is the prefix of $w$ of the shortest word length satisfying $h(q)\geq \ell+r$.   Evidently $p$ and $q$ depend only on $w$ and $\ell$.  But with the aid of (\ref{prefsuf}),
%$u\zero$ and $v\two$ likewise depend only on $w$ and $\ell$.  But $u\one=\left(u\zero\right)^{-1}p$ and $u\two v\zero= p^{-1}q$.
%Moreover, setting 
%\[\;w\onezero=u\one, \;\;w\oneone=u\two v\zero,\;\; w\onetwo=v\one,\]
% we get    As a matter of fact,
we can write
\[w\onezero=\left(w\zero\right)^{-1}p,\quad w\oneone= p^{-1}q,\quad w\onetwo=q^{-1}w\left(w\two\right)^{-1}.\]
Hence $w\onezero$, $w\oneone$, and $w\onetwo$ depend only on $w$ and $\ell$.

(c)  This is the middle equation in~(\ref{prefsuf}).
\end{proof}

While so far we have looked at elements of $T\cap F$, now let us see what this lemma implies about general elements of $T$.   Suppose $ \uu,\vv \in T$ and $\ww=\uu\vv$.  Denote $\ell=h(\uu)$.  We have well-defined decompositions into sums obtained by grouping terms (where $a$ and $b$ run through $F$ but only finitely many terms are non-zero):
\begin{align*}
\uu&=\sum_a\uu_a, &\mbox{  where  }  &\forall u \in \supp(\uu_a),\quad u\one=a\\
\vv&=\sum_b\vv_b  &\mbox{  where  }  &\forall v \in \supp(\vv_b),  \quad v\one=b\\
\ww&=\sum_{a,b} \ww_{a,b} &\mbox{ where }&\forall w\in\supp(\ww_{a,b}),\quad  w\onezero=a \mbox{ and } w\onetwo=b.
\end{align*}
 The last decomposition depends only on $\ww$ and $\ell$ and not on the factors $\uu$ and $\vv$.
From $\ww=\uu\vv$ and Lemma-Definition~\ref{Tlemma} parts (a) and (b), we get 
\[\ww_{a,b}=\uu_a\vv_b.\]  

In particular, and this is the case of interest for us at the moment, suppose $h(\uu)=h(\vv)$ and $\uu\vv=\vv\uu$.  Then we arrive at
\begin{equation}\label{eq:uavb}
(\forall a,b)\quad\uu_a\vv_b= \vv_a\uu_b.
\end{equation}
This right away gives two things.  First, by letting $b=a$ in the above,% we see that 
\begin{equation*} (\forall a)\quad\uu_a\vv_a=\vv_a\uu_a.\end{equation*}
  Second, $\uu\in T$ implies $\uu\neq 0$ and so there is a $b$ such that $\uu_b\neq 0$.  With that value of $b$ and with the aid of the fact that $\AA$ is an integral domain, (\ref{eq:uavb}) shows that $\uu_a=0\implies\vv_a=0$.  Similarly $\vv_a=0\implies\uu_a=0$.  Thus
  \begin{equation*}
  (\forall a)\quad \uu_a=0 \iff \vv_a=0.
 \end{equation*} 
 As $ \{u\one| u\in \supp(\uu)\}= \{a | \uu_a\neq 0\}$ and  $ \{v\one| v\in \supp(\vv)\}= \{a | \vv_a\neq 0\}$, we get
\begin{equation*} \{u\one| u\in \supp(\uu)\} =\{v\one| v\in \supp(\vv)\}.
\end{equation*}

\begin{prop} \label{hg} Suppose $\uu,\vv \in \AA$ are homogeneous of the same non-zero degree with respect to a homomorphism ${h: F\to (\RR,+)}$.   
If $\uu\vv=\vv\uu$, then $\uu=\lambda \vv$ for a scalar $\lambda$.
\end{prop}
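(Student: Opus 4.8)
The plan is to bootstrap from the case already analyzed --- commuting elements of a set $T(r)$, for which the discussion following Lemma-Definition~\ref{Tlemma} yields the relation~\eqref{eq:uavb} --- to the general homogeneous case, by passing to high powers, invoking Lemma~\ref{linAlgLem}, and then recovering $\uu$ and $\vv$ from their powers. First two harmless reductions. The conclusion does not mention $h$, so if the common degree $\ell=h(\uu)=h(\vv)$ is negative we replace $h$ by $-h$ and assume $\ell>0$; and $\uu,\vv\ne 0$ since a homogeneous element of finite degree is nonzero by convention. We also use the fact anticipated before Definition~\ref{T}: there exist $r>0$ and $N_0$ with $\uu^n,\vv^n\in T(r)$ for all $n\ge N_0$. (This I would obtain by bounding $\abs{h}$ along prefixes: if $B$ exceeds $\abs{h(p)}$ for every prefix $p\le w$ of every $w\in\supp(\uu)\cup\supp(\vv)$, then for all $n$ every prefix of an element of $\supp(\uu^n)\cup\supp(\vv^n)$ has $h$-degree between $-B$ and $(n-1)\ell+B$, so $\uu^n,\vv^n\in T_2(r)$ whenever $r\ge B$, while $T_1(r)$ holds as soon as $n\ell\ge 2(r+\max_{a\in\Alphabet}\abs{h(a)})$.)

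Fix $n\ge N_0$ and abbreviate $\uu'=\uu^n$, $\vv'=\vv^n$, $L=n\ell=h(\uu')=h(\vv')$. Since $\uu\vv=\vv\uu$ we have $\uu'\vv'=\vv'\uu'$, so $\uu',\vv'\in T(r)$ are homogeneous of the same degree $L$ and commute, and the analysis leading to~\eqref{eq:uavb} applies verbatim: grouping the terms of $\uu'$ (resp.\ $\vv'$) by the value $u\one$ (resp.\ $v\one$) and writing $\uu'=\sum_a\uu'_a$, $\vv'=\sum_a\vv'_a$, we obtain $\uu'_a\vv'_b=\vv'_a\uu'_b$ for all $a,b$, together with $\uu'_a=0\iff\vv'_a=0$ and the coincidence of the two index sets, which we call $I$. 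Fix $a\in I$; then $\uu'_a$ and $\vv'_a$ are nonzero and, by the case $b=a$, commute. Apply Lemma~\ref{linAlgLem}: let $U$ be the $k$-span of $\{u\zero:u\in\supp\uu'_a\cup\supp\vv'_a\}\subseteq\Ocal$, let $V$ be the $k$-span of the corresponding suffixes $u\two$, lying in $\widetilde{\Ocal}$, let $Z=\AA$, and let $\mu\colon V\times U\to Z$ be the restriction of multiplication in $\AA$; it has no zero divisors over $\bar k$ because $\bar k F$ is an integral domain. By uniqueness of the factorization~\eqref{eq:fact}, the map $u\mapsto u\zero\tsr u\two$ is injective on $\supp\uu'_a\cup\supp\vv'_a$, and distinct such pure tensors are linearly independent in $\AA\tsr\AA$, so it extends to a $k$-linear injection $\iota$ carrying $\uu'_a,\vv'_a$ to nonzero $\hat u,\hat v\in U\tsr V$.

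The crux is that, using the reduced factorization~\eqref{redfactw} and the uniqueness in Lemma-Definition~\ref{Tlemma}(b) (applied with the fixed value $\ell=L$), the ``multiplying-out'' map $U\tsr Z\tsr V\to\AA$, $x\tsr z\tsr y\mapsto x\,a\,z\,a\,y$, is injective on the span of the pure tensors $u\zero\tsr(u\two v\zero)\tsr v\two$ and $v\zero\tsr(v\two u\zero)\tsr u\two$ that occur in $\hat u*\hat v$ and $\hat v*\hat u$ --- by~\eqref{redfactw} each such tensor multiplies out to a reduced product of five group elements whose word, together with $L$, recovers the tensor via Lemma-Definition~\ref{Tlemma}(b). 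Since this map sends $\hat u*\hat v$ to $\uu'_a\vv'_a$ and $\hat v*\hat u$ to $\vv'_a\uu'_a$, which are equal, we get $\hat u*\hat v=\hat v*\hat u$; Lemma~\ref{linAlgLem} then forces $\hat u$ and $\hat v$, hence (via $\iota$) $\uu'_a$ and $\vv'_a$, to be linearly dependent over $k$. Being nonzero, $\uu'_a=\lambda_a\vv'_a$ with $\lambda_a\in k^{\times}$. Plugging this into $\uu'_a\vv'_b=\vv'_a\uu'_b$ gives $(\lambda_a-\lambda_b)\vv'_a\vv'_b=0$ for all $a,b\in I$, so $\lambda_a=\lambda_b$ since $\AA$ is a domain and both factors are nonzero; summing over $a\in I$ yields $\uu^n=\lambda_n\vv^n$ for a single $\lambda_n\in k^{\times}$ ($\lambda_n\ne0$ because $\uu^n\ne0$).

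It remains to pass from powers back to $\uu,\vv$. Applying the foregoing with $n=N_0$ and $n=N_0+1$ gives $\uu^{N_0}=\lambda_{N_0}\vv^{N_0}$ and $\uu^{N_0+1}=\lambda_{N_0+1}\vv^{N_0+1}$, whence $\lambda_{N_0}\,\uu\,\vv^{N_0}=\uu^{N_0+1}=\lambda_{N_0+1}\,\vv\,\vv^{N_0}$, so $(\lambda_{N_0}\uu-\lambda_{N_0+1}\vv)\vv^{N_0}=0$; since $\AA$ is a domain and $\vv^{N_0}\ne0$, $\uu=(\lambda_{N_0+1}/\lambda_{N_0})\vv$, as required. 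I expect the main obstacle to be the step in the third paragraph --- verifying that the identity $\uu'_a\vv'_a=\vv'_a\uu'_a$ in $\AA$ transfers faithfully to $\hat u*\hat v=\hat v*\hat u$ --- which is precisely where the combinatorics of $\Ocal$, $\widetilde{\Ocal}$, the factorization~\eqref{redfactw}, and its uniqueness in Lemma-Definition~\ref{Tlemma} are indispensable: they forbid any accidental cancellation that would spoil the dictionary between multiplication in $\AA$ and the formal product $*$, and in particular they prevent a tensor of the first shape and one of the second from multiplying out to the same group element unless they are equal. The subsidiary claim that powers of a positive-degree homogeneous element land in some $T(r)$, although routine, likewise rests on controlling $h$ along prefixes.
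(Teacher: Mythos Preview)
Your proof is correct and tracks the paper's approach through its core: reduce to positive degree, pass to powers $\uu^n,\vv^n$ lying in some $T(r)$, decompose by the value of $w\one$ to obtain~\eqref{eq:uavb}, and apply Lemma~\ref{linAlgLem} via the identifications $U\tsr V\cong UaV$ and $U\tsr Z\tsr V\cong UaZaV$. The paper writes these two isomorphisms out more explicitly, but uses exactly the prefix/suffix uniqueness from Lemma~\ref{propO} that you invoke, so the ``crux'' step you flag is handled by the same combinatorics in both versions.

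The one genuine difference is the endgame. From $\uu^n=\lambda_n\vv^n$ the paper takes a \emph{single} $n$, passes to $\Frac\bar k[\uu,\vv]$, and observes that $\uu/\vv$ is an $n$-th root of a scalar and hence lies in $\bar k$ (since $\bar k$ is algebraically closed in any overfield), then descends to $k$ by comparing coefficients. You instead use two consecutive exponents $N_0$ and $N_0+1$ and cancel $\vv^{N_0}$ in the integral domain $\AA$. Your route is more elementary---no extension of scalars, no fraction field---at the small price of needing $\uu^n,\vv^n\in T(r)$ for \emph{all} $n\ge N_0$ rather than for one $n$; you do establish this. (A minor stylistic point: the paper gets $\uu^n\in T_2(r)$ from the already-proved closure of $T_2$ under multiplication, while you track $h$ along prefixes directly; the two arguments are the same computation.)
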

\begin{proof}  
 Let us suppose without loss of generality that $h(\uu)=h(\vv)>0$, for otherwise we can replace the homomorphism $h:F\to(\RR,+)$ with $-h$.  

Suppose first that $\uu,\vv\in T(r)$ for some $r>0$.

Denote $S=   \supp(\uu) \bigcup  \supp(\vv)$.  For $a\in S\one:=\{ w\one | w\in S\}$, let $S_a= \{ w \in  S \; | w\one = a\}$, obtaining a partition of $S$: $S=\bigsqcup_a S_a$.  

Recapping the discussion prior to the present proposition, let us decompose uniquely
\[\uu= \sum_a \uu_a \mbox{ where }  \supp(\uu_a)\subseteq S_a, \;\;\;\;\;\;\;\;\;\;\;\;\;\;\vv= \sum_b \vv_b \mbox{ where }  \supp(\vv_b)\subseteq S_b,\]
arriving at $\forall a,b$,
 \begin{equation*} \uu_a\vv_b=  \vv_a \uu_b,\quad\uu_a\vv_a=\vv_a\uu_a,\quad \uu_a\neq0 \iff \vv_a\neq 0 \iff a\in S\one.\end{equation*}

Let us now set the stage for an application of Lemma~\ref{linAlgLem}. 

Fix any $a\in S\one$.  Let \[S_a\zero:=\{w\zero | w\in S_a\}, \qquad S_a\two:=\{w\two | w\in S_a\},\] and 
\begin{align}
U &= \vecspan{k}{\!\left(S_a\zero\right)}\\
V &= \vecspan{k}{\!\left( S_a\two\right)}  \qquad\mbox{(Note that $U, V\subset\AA$)}\\
Z&=VU\subset \AA
\end{align}
 and define
\[\mu:V\tsr U \to Z\]
to be the multiplication from $\AA$.  

Let us verify that $\mu$ has no zero divisors over $\bar{k}$ (cf. Lemma~\ref{linAlgLem}).   Tensoring by $\bar{k}$ the inclusions
\[U, V, Z\subset \AA,\]
we get 
\[\bar{k}\tsr U,\; \bar{k}\tsr V, \;\;\bar{k}\tsr Z \;\subset \;\bar{k}\tsr(\gring{k}{F}) \cong \gring{\bar{k}}{F}.\]
 Moreover, $\bar{k}\tsr \mu$ coincides with the multiplication in $\bar{k}F$ under these inclusions.   A free group algebra over any field is an integral domain.  In particular, so is $\bar{k}F$.  Hence $\mu$ has no zero divisors over $\bar{k}$.
 %Let $U$ be the $k$-span of $\left(S_a\right)\zero:=\{w\zero | w\in S_a\}$ and $V$ the $k$-span of $\left(S_a\right)\two:=\{w\two | w\in S_a\}$.  Notice that $\uu_a, \vv_a\in UaV$.   

Thus Lemma~\ref{linAlgLem} applies, but in order to benefit from it, we need to establish a couple of isomorphisms.   

Observe that for any $u\in S_a\zero \subseteq\Ocal$ and
$v\in S_a\two\subseteq\widetilde\Ocal$,  
\[u\rprod a\rprod v\]
 is a reduced factorization because $u\rprod a$ and $a\rprod v$ have this property.  
 
% By this observation and by the fact that no element of $\Ocal$ is a proper prefix of another and no element of $\widetilde{\Ocal}$ is a proper suffix of another (the first two Properties of $\Ocal$ and $\widetilde\Ocal$), 
%\[S_a\zero \times S_a\two\to S_a\zero a S_a\two,  \;\;\;\;   (u, v) \to uav\] 
%is bijective.   Passing from a bijection of bases to an isomorphism of vector spaces, we get 
%\[U\tsr V\isoto UaV.\] 

This observation shows that $u$ is a prefix of $uav$.  As $u\in\Ocal$ and as no element of $\Ocal$ is a prefix of another (the first of the Properties of $\Ocal$ and $\widetilde\Ocal$), $u$ is {\em the unique} prefix of $uav$ contained in $\Ocal$.  Likewise $v$ is {\em the unique} suffix of $uav$ contained in $\widetilde{\Ocal}$.  We conclude that the map
\[S_a\zero \times S_a\two\to S_a\zero a S_a\two,  \;\;\;\;   (u, v) \to uav\] 
is injective. As it is also clearly surjective, it is bijective. Passing from a bijection of bases to an isomorphism of vector spaces, we get 
\[U\tsr V\isoto UaV.\] 
 The second isomorphism which we need to demonstrate is
\begin{equation}\label{eq:UZV}
U\tsr Z\tsr V \isoto UaVUaV\subset \AA
\end{equation}
\[ u\tsr z\tsr v \mapsto uazav.\]
Looking at the bases, it suffices to show that the map
\[S_a\zero\times S_a\two S_a\zero \times S_a\two \to F\]
\[ (u, gf, v)\mapsto uagfav\]
is injective.  To show that, we start with $w=uagfav$ and from it we will now recover $u, v$ and $gf$.   We have the reduced factorizations $u\rprod a\rprod g$ and $f\rprod a\rprod v$.  By the last Property of $\Ocal$ and $\widetilde{\Ocal}$, neither $\bar{g}$ nor $\bar{f}$ cancel completely in $ \bar{gf}$.   Therefore by Observation~\ref{obs:red},
\[w= u\rprod a \rprod(gf)\rprod a\rprod v\]
is a reduced factorization (with $gf$ as a single factor).  By the first two Properties of $\Ocal$ and $\widetilde\Ocal$, $u$ is the only prefix of $w$ contained in $\Ocal$ and $v$ is the only suffix of $w$ contained in $\widetilde{\Ocal}$.   So we have recovered $u$ and $v$.  Then we recover $gf$ via  $gf= (ua)^{-1}w(av)^{-1}$. %from the word $\bar{uagfav}$ one can read off uniquely $u$ and $v$.  Having read off $u$ and $v$, we are left with $\bar{agfa}$.  Canceling $a$ from both ends, we recover $gf$.   
This shows that the above map is indeed injective and therefore (\ref{eq:UZV}) holds.

Finally observe that $\uu_a,\vv_a\in UaV\cong U\tsr V$.  

For every $a\in S\one$, we can now apply Lemma~\ref{linAlgLem} taking the images of $\uu_a$ and $\vv_a$ in $U\tsr V$ as $t$ and $s$.   Recall that for $a\in S\one$,  both $\uu_a$ and $\vv_a$ are non-zero.  So the lemma gives
  \[\vv_a= \lambda_a \uu_a.\]  
It remains to show that $\lambda_a$ is independent of $a$.  Indeed,
\[\uu_a \vv_b = \vv_a\uu_b,\]
\[\lambda_b\uu_a \uu_b = \lambda_a\uu_a\uu_b\]
and since $\AA$ is an integral domain, we can cancel $\uu_a\uu_b$ and conclude that $\lambda_a=\lambda_b$ for all $a,b$.

We have just shown that the conclusion of the present proposition holds when $\uu,\vv\in T$.  Now let us consider the general case.  Thus, now $\uu$ and $\vv$ are arbitrary commuting $h$-homogeneous elements of $\AA$ with $h(\uu)= h(\vv)$. %which commute and which are of the same nonzero $h$-degree.  
Moreover as mentioned in the beginning of the proof, we can assume that $h(\uu)=h(\vv)>0$.

We will see now that for some $r>0$ and some $n>0$,  $\uu^n,\vv^n\in T(r)$.

By the definition of $T_2$ and by finiteness of supports, for $r>>0$ we have  $\uu,\vv\in T_2(r)$.   Let us fix such $r$.  It follows from the fact that $T_2$ is closed under multiplication, that $\forall n>0$, $\uu^n,\vv^n\in T_2$.   On the other hand, from the definition of $T_1$ and the Archimedean property of $\RR$ it follows that for $n>>0$, we also have that
$\uu^n, \vv^n\in T_1$.   For any such $n$, it follows that $\uu^n, \vv^n\in T$.

 Since $\uu$ and $\vv$ commute, $\uu^n$, $\vv^n$ commute as well.   By what we have just shown,  
 \begin{equation}\label{eq:nthroot}
 \uu^n= c \vv^n, \quad\quad c\in k.
 \end{equation}
 Regarding $\gring{k}{F}$ as a subring of $\gring{\bar{k}}{F}$, we can work in $\gring{\bar{k}}{F}$.  Since $\gring{\bar{k}}{F}$  is an integral domain and $\uu$ and $\vv$ commute, the sub-algebra $\bar{k}[\uu,\vv]\subseteq\gring{\bar{k}}{F}$ is a commutative integral domain.  This commutative integral domain is contained in its field of fractions $\Frac\bar{k}[\uu,\vv]$.   Equation  (\ref{eq:nthroot}) implies that $\uu/\vv$ is an $n$-th root of $c$ in $\Frac\bar{k}[\uu,\vv]$, and since $\bar{k}$ is algebraically closed,  it is algebraically closed in that overfield; so the $n$-th root of $c$ in that overfield must lie in $\bar{k}$; so $\uu\in\bar{k}\vv$.  Hence, since within $\gring{\bar{k}}{F}$, $\uu$ and $\vv$ have coefficients in $k$,  we must have $\uu\in k\vv$.  
\end{proof}

\begin{cor} \label{deg0} 
Suppose $\uu,\vv\in \AA\without k$ and $\uu\vv=\vv\uu$.  Let $h:F\to (\RR,+)$ be a group homomorphism and let $F\ab$ denote the abelianization of $F$ (with the group operation written additively).
\begin{compactenum}[(a)] 
\item If $\uu$ is $h$-homogeneous of degree $0$,  then $\vv$ is also $h$-homogeneous of degree $0$.  
\item If $\uu$,$\vv$  are arbitrary, not necessarily homogeneous,  the images of $\supp(\uu)$ and $\supp(\vv)$ in $\QQ\tsr_\ZZ F\ab$  span the same $\QQ$-vector space. 
\item If $\uu$, $\vv$ are $h$-homogeneous, then $\QQ h(\uu)=\QQ h(\vv)$.
\item If $\uu, \vv$ are $h$-homogeneous, $h(\uu)>0$,  $h(\vv)<0$, then $\uu$ and $\vv$ are monomials.
\end{compactenum}
\end{cor}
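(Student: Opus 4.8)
I would prove the four parts in order: part~(a) is the only one that really uses Proposition~\ref{hg}, and parts (b)--(d) are deductions from (a) together with the bi-ordering of $F$. Throughout I may assume $F$ is finitely generated, since $\supp(\uu)$ and $\supp(\vv)$ involve only finitely many free generators (cf. Lemma~\ref{finiteAlphabet}); this is what lets Proposition~\ref{hg} apply.

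\textbf{Part (a).} Decompose $\vv=\sum_a\vv_a$ into its $h$-homogeneous components. Since $\uu$ is $h$-homogeneous of degree $0$, left and right multiplication by $\uu$ each carry every $\AA_a$ into itself, so comparing the degree-$a$ components of $\uu\vv=\vv\uu$ gives $\uu\vv_a=\vv_a\uu$ for every $a$. If some $\vv_a$ with $a\neq 0$ were nonzero, then $\uu\vv_a$ and $\vv_a$ would be $h$-homogeneous elements of the same nonzero degree $a$ which commute (the latter because $\vv_a$ commutes with $\uu$), so Proposition~\ref{hg} would give $\uu\vv_a=\lambda\vv_a$ with $\lambda\in k$; cancelling $\vv_a$ in the integral domain $\AA$ yields $\uu=\lambda\in k$, contradicting $\uu\notin k$. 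Hence $\vv=\vv_0$ is $h$-homogeneous of degree $0$. The one point to get right is that the element to pair with $\vv_a$ in Proposition~\ref{hg} is $\uu\vv_a$, not $\uu$.

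\textbf{Part (b).} If the $\QQ$-spans of the images of $\supp(\uu)$ and $\supp(\vv)$ in $\QQ\tsr_\ZZ F\ab$ were different, then after possibly interchanging $\uu$ and $\vv$ there would be some $w\in\supp(\uu)$ whose image lies outside the span of the image of $\supp(\vv)$. Choose a $\QQ$-linear functional on $\QQ\tsr_\ZZ F\ab$ vanishing on that span but not at the image of $w$, and compose $F\to F\ab\to\QQ\tsr_\ZZ F\ab\to\QQ\injects\RR$ to get a homomorphism $h'\colon F\to(\RR,+)$ which is identically $0$ on $\supp(\vv)$ but not on $\supp(\uu)$. Then $\vv$ is $h'$-homogeneous of degree $0$ and commutes with $\uu$, so part~(a) applied with $h'$ and with the roles of $\uu,\vv$ reversed forces $\uu$ to be $h'$-homogeneous of degree $0$ too, contradicting $h'|_{\supp(\uu)}\not\equiv 0$; hence the two spans coincide.

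\textbf{Parts (c) and (d).} Here I would bi-order $F$ (cf. Section~\ref{fga}) and use that, by translation-invariance of the order, $\min\supp(\uu\vv)=\min\supp(\uu)\cdot\min\supp(\vv)$ and likewise for $\max$ --- the same ``no cancellation at the extremes'' principle already used in the proof of Lemma~\ref{finiteAlphabet}. If $\uu,\vv$ are $h$-homogeneous and commute with $h(\uu)$ and $h(\vv)$ both nonzero, then $\min\supp(\uu)$ and $\min\supp(\vv)$ are commuting elements of $F$, nontrivial because their $h$-values are $h(\uu)\neq 0$ and $h(\vv)\neq 0$; hence they are powers $r^i,r^j$ of a common $r\in F$, and comparing $h$-values gives $h(\uu)/h(\vv)=i/j\in\QQ$, i.e. $\QQ h(\uu)=\QQ h(\vv)$. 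The case where $h(\uu)$ or $h(\vv)$ vanishes reduces to part~(a), which then forces both degrees to be $0$; this proves~(c). For~(d), use~(c) to pick $n,m>0$ with $n\,h(\uu)+m\,h(\vv)=0$; then $\uu^n$ and $\vv^m$ commute and $h(\uu^n)=-h(\vv^m)\neq 0$, and running the computation above on both the minimal and the maximal support elements of $\uu^n$ and $\vv^m$ gives $\min\supp(\uu^n\vv^m)=\max\supp(\uu^n\vv^m)=1$, so $\uu^n\vv^m$ is a nonzero scalar. Therefore $\uu^n$ is a unit of $\AA$; since everything in sight commutes, $\uu$ itself is a unit, hence a monomial (the units of $\AA$ are the monomials), and symmetrically $\vv$ is a monomial. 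The only place where real work is hidden is the setup of part~(a); everything else is bookkeeping with the grading and the bi-order, plus the routine reduction to finitely generated $F$ so that Proposition~\ref{hg} is available.
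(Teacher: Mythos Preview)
Your proof is correct. Parts (a) and (b) match the paper's argument essentially verbatim (the paper phrases (b) as ``apply (a) to all degree functions $h$ vanishing on one of the two supports''; you spell out the same construction). Parts (c) and (d), however, take a different route.

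For (c), the paper deduces the claim purely algebraically from (b): writing $h=\eta\circ q$ with $q:F\to\QQ\otimes F\ab$, the common $\QQ$-span $V$ of the supports (from (b)) satisfies $\eta(V)=\QQ\,h(\uu)=\QQ\,h(\vv)$. For (d), the paper reaches $\uu^m\vv^n\in k$ by applying (a) once more: $\uu^m\vv^n$ is homogeneous of degree~$0$ and commutes with $\uu$, so if it were not in $k$, (a) would force $h(\uu)=0$. You instead use the bi-ordering throughout: commutation of the extremal support elements in $F$, together with cyclicity of centralizers in $F$, gives the rational ratio in (c) and the identity $\min\supp(\uu^n\vv^m)=\max\supp(\uu^n\vv^m)=1$ in (d). Both approaches are clean; the paper's has the virtue of reusing (a) and (b) rather than invoking the group-theoretic structure of $F$ again, while yours is more self-contained and makes (c)--(d) independent of (b). Either would be acceptable.
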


\begin{proof}

(a)  Suppose the contrary.  Then either the lowest- or highest-degree homogeneous component of $\vv$ has a non-zero degree.  Denote that homogeneous component by $\ww$.  Then we have:
\begin{align*}
& \;\;\;\;\;\; \;\;\;\;\uu\ww=\ww\uu\\
&  \implies(\uu\ww)\ww=\ww(\uu\ww)\\
& \implies \uu\ww=\lambda\ww \mbox{ for some }\lambda\in k  \mbox{  (by Proposition~\ref{hg}) }\\
&  \implies \uu = \lambda \mbox{ (because $\AA$ is an integral domain)}
\end{align*}
which is a contradiction.

(b)  Apply (a) to all possible degree functions $h$ which vanish on one of the two supports.

(c)     Let 
\[q: F\to \QQ \tsr_\ZZ F\ab\] 
be the canonical homomorphism.  Then $h$ factors through $q$: $h= \eta\circ q$.

Let $V= \QQ\, q(\supp(\uu))=\QQ\, q(\supp(\vv))$ be the common (due to part b) $\QQ$-span of the images of $\supp(\uu)$ and $\supp(\vv)$ in $\QQ\tsr_\ZZ F\ab$.  Then 
\[\eta(V)= \eta(\QQ \,q(\supp(\uu)))= \QQ \eta(q(\supp(\uu)))= \QQ h(\supp(\uu))= \QQ h(\uu),\]
where the last equality holds because $\uu$ is $h$-homogeneous.   Likewise, \( \eta(V)= \QQ h(\vv)\).

(d)  By (c), we can write $h(\uu)/h(\vv)= -n/m$ with $n,m$ positive integers.  Then $h( \uu^m\vv^n)=0$, and, since $\uu^m\vv^n$ commutes with $\uu$, by (a), $\uu^m\vv^n\in k$.   Moreover, since $\AA$ is a domain, $\uu^m\vv^n\neq 0$.  Hence $\uu,\vv$ are invertible, and all invertible elements in $\AA$ are monomials (cf. Section~\ref{fga}).
 \end{proof}

%%%%%%%%%%%%%%%%%%%%%%%%%%%%%%%%%%%%%%%%%%%%%%%%%%%%%%%%%%%%%%%%%%%%%%%%%%%
%
% SECTION
%
%%%%%%%%%%%%%%%%%%%%%%%%%%%%%%%%%%%%%%%%%%%%%%%%%%%%%%%%%%%%%%%%%%%%%%%%%%%
\section{Centralizers}
\begin{lem}\label{lem:val} Let $C$ be the centralizer of an element in $\AA\without k$.  If $h$ does not vanish on $C\wozero$ then $h$ restricted to $C$ is a degree function over $k$ which is discrete (cf. Definition~\ref{degFun}) and which satisfies:
\begin{equation}\label{eq:totallyRamified2}
     \left(\forall  \uu,\vv \in C\wozero\right)\quad    h(\uu)= h(\vv)                \implies \left(\exists \lambda \in k\right)\;\; h(\uu-\lambda \vv)<h(\uu).\quad \mbox{(cf.~(\ref{eq:totallyRamified}))}
\end{equation}
\end{lem}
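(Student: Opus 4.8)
The plan is to check the three assertions about $h|_C$ in turn. That $h|_C$ is a degree function on $C$ over $k$ is immediate: it was shown at the end of Section~\ref{degFun} that $h\colon\AA\to\{-\infty\}\cup\RR$ is a degree function on $\AA$ over $k$, and $C$ is a unital $k$-subalgebra of $\AA$, so the restriction inherits all the defining properties.

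For the ramification condition~(\ref{eq:totallyRamified2}), recall that $C$ is commutative (Section~\ref{centralizerBasics}), so any two elements of $C$ commute. Let $\uu,\vv\in C\wozero$ with $h(\uu)=h(\vv)=\ell$. Suppose first $\ell\neq0$. Then the top homogeneous components $\uu_h,\vv_h$ are $h$-homogeneous of the same nonzero degree $\ell$, and they commute, since $\uu_h\vv_h=(\uu\vv)_h=(\vv\uu)_h=\vv_h\uu_h$ by~(\ref{eq:key}). Proposition~\ref{hg} gives $\uu_h=\lambda\vv_h$ for some $\lambda\in k$, and as $\uu-\uu_h$ and $\vv-\vv_h$ have $h$-degree $<\ell$, so does $\uu-\lambda\vv=(\uu-\uu_h)-\lambda(\vv-\vv_h)$. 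For the case $\ell=0$, choose $\ww\in C$ with $h(\ww)\neq0$ (one exists by hypothesis); then $\uu\ww,\vv\ww\in C\wozero$ have common nonzero $h$-degree $h(\ww)$, so the previous case yields $\lambda\in k$ with $h\big((\uu-\lambda\vv)\ww\big)=h(\uu\ww-\lambda\vv\ww)<h(\ww)$, and cancelling $h(\ww)$ via~(\ref{degmult}) gives $h(\uu-\lambda\vv)<0=\ell$ (the case $\uu-\lambda\vv=0$ being trivial).

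It remains to show $h|_C$ is discrete, i.e. that $h(C\wozero)$ lies in a cyclic subgroup of $(\RR,+)$. Since the element whose centralizer is $C$ involves only finitely many of the generators, Lemma~\ref{finiteAlphabet} lets us replace $F$ by the free factor generated by those generators, so we may assume $F$ is finitely generated; then $h(F)$ is a finitely generated subgroup of $(\RR,+)$, and $h(C\wozero)\subseteq h(F)$. By hypothesis there is $\uu_0\in C\wozero$ with $\alpha:=h(\uu_0)\neq0$. For any $\vv\in C\wozero$ with $h(\vv)\neq0$, the components $(\uu_0)_h$ and $\vv_h$ are $h$-homogeneous, non-scalar (their degrees being nonzero), and commute as above, so Corollary~\ref{deg0}(c) gives $\QQ h(\vv)=\QQ h(\uu_0)=\QQ\alpha$; hence $h(\vv)\in\QQ\alpha$, which also holds trivially when $h(\vv)=0$. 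Therefore $h(C\wozero)\subseteq\QQ\alpha\cap h(F)$. The latter is a subgroup of $(\RR,+)$ which is finitely generated (being a subgroup of the finitely generated abelian group $h(F)$) and lies inside the $\QQ$-line $\QQ\alpha\cong\QQ$; a finitely generated subgroup of $\QQ$ is cyclic, so $h|_C$ maps $C\wozero$ into a cyclic subgroup and is therefore discrete.

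The ramification condition falls out quickly from Proposition~\ref{hg} once one passes to top homogeneous components, and the $\ell=0$ case reduces to the $\ell\neq0$ case by multiplying by an element on which $h$ is non-trivial. The step I expect to need the most care is discreteness: neither Proposition~\ref{hg} nor the ramification condition by itself forces the value group to be cyclic, and one has to combine the fact that all nonzero $h$-degrees occurring in $C$ lie on one rational line (Corollary~\ref{deg0}(c)) with the finite-rank reduction, which confines those degrees to a finitely generated subgroup of $\RR$ — necessarily cyclic, since it sits on a line.
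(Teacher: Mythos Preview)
Your proof is correct and follows essentially the same route as the paper: the degree-function property is inherited by restriction, discreteness comes from Corollary~\ref{deg0}(c) combined with finite generation of $h(F)$, and the nonzero-degree case of~(\ref{eq:totallyRamified2}) comes straight from Proposition~\ref{hg}. The only minor differences are that in the degree-zero case you reduce to the nonzero case by multiplying through by some $\ww\in C$ with $h(\ww)\neq 0$, whereas the paper instead invokes Corollary~\ref{deg0}(a) to show directly that $\uu_h,\vv_h\in k$; and your appeal to Lemma~\ref{finiteAlphabet} is unnecessary, since the paper has fixed a finite alphabet from the outset.
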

\begin{proof}   

As shown in Section~\ref{degFun}, $h$ is a degree function on  $\AA$ and so $h\big|C$ is a degree function on $C$.  We just need to show that $h{\big|C}$ is discrete and has property~(\ref{eq:totallyRamified2}).

Before we continue, let us recall useful notation (cf. Section~\ref{degFun}): for any element $\uu\in \AA\wozero$, we denote by $\uu_h$ the highest-degree homogeneous component of $\uu$.  Recall also that 
\begin{equation}\label{eq:degLeadTerm}
h(\uu)=h(\uu_h).
\end{equation}
Let $\ww\in C\wozero$ be such that $h(\ww)\neq 0$.   Note that this implies that $\ww_h\not\in k$.

First, let us show that $h$ is discrete on $C$.  Let $\vv\in C\wozero$ be another element with $h(\vv)\neq 0$.   This implies that $\vv_h\not\in k$.  Since $\ww$ and $\vv$ commute, $\ww_h, \vv_h$ commute as well.  Now, applying Corollary~\ref{deg0}c to $\ww_h$ and $\vv_h$, we see that $h(\vv_h)\in \QQ h(\ww_h)$.  Therefore by (\ref{eq:degLeadTerm}),  $h(\vv)\in\QQ h(\ww)$. Thus, $h(C\wozero)\subseteq \QQ h(\ww)$.  So 
\begin{equation}\label{eq:discreteness}
h(C\wozero)\subseteq \QQ h(\ww)\bigcap h(F).
\end{equation}
Since $F$ is finitely generated, $h(F)$ is finitely generated abelian.  The right-hand side of (\ref{eq:discreteness}), being a subgroup of $h(F)$, is also finitely generated, but, given that it is also a subgroup of $\QQ h(\ww)\cong \QQ$, the right-hand side is cyclic.   

Let us now establish property~(\ref{eq:totallyRamified2}).  Suppose first $h(\uu)=h(\vv)=0$.  Then it is enough to show that $\uu_h,\vv_h\in k$.   Since $\ww_h$ and $\uu_h$ commute,  by Corollary~\ref{deg0}a, it follows that $\uu_h\in k$.  For the same reason, $\vv_h\in  k$.  

Now suppose $h(\uu)=h(\vv)\neq 0$.  Then $\uu_h$ and $\vv_h$ commute and the property follows directly from Proposition~\ref{hg}. 
\end{proof}
% \begin{lem}\label{cyclic} Let $\uu\in \AA\backslash k$ and let $C$ be the centralizer of $\uu$.  Suppose that for any finitely generated subgroup $G<F$ such that $C\subseteq k[G]$, the image of $\supp\uu$ in $G^{ab}$, the abelianization of $G$, is contained in a cyclic subgroup of $G^{ab}$.  Then $\supp \uu$ is contained in a cyclic subgroup of $F$. 
%\end{lem}
To set the notation of the next proposition and its corollaries, let $\uu\in \AA\without k$ and let $C$ be the centralizer of $\uu$.  Let $H<F$ be the subgroup of $F$ generated by the union of the supports of all elements of $C$:
\[ H = \big\langle \bigcup_{\vv\in C}\supp(\vv)\big\rangle.\] 
Let us call $H$ the {\em centralizer-supporting group of $\uu$}.
\begin{prop}\label{prop:centralizerSupportGroup}  
The centralizer-supporting group $H$ of $u\in \AA\without k$  is finitely generated.  Moreover, the image of $\supp(\uu)$ in $H\ab$, the abelianization of $H$, generates a subgroup of $H\ab$ whose rank equals the rank of $H$.  
\end{prop}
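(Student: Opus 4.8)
The plan is to run the entire argument inside the free group $H$, not inside $F$. Two facts make this legitimate. First, by the very definition of $H$ every element of $C$ has support contained in $H$, so $C\subseteq\gring{k}{H}$; since $\gring{k}{H}\subseteq\gring{k}{F}$, this means that $C$ is exactly the centralizer of $\uu$ in $\gring{k}{H}$. Second, by Nielsen--Schreier $H$ is free, so we may fix a free basis $B$ of $H$.

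First I would establish finite generation. As $\supp(\uu)$ is a finite set, the reduced words over $B^{\pm1}$ representing its elements involve only finitely many members $e_1,\dots,e_p$ of $B$; put $G=\langle e_1,\dots,e_p\rangle$. Being generated by a subset of a free basis, $G$ is a free factor of $H$, say $H=G\ast K$, and $\uu\in\gring{k}{G}\without k$. Now apply Lemma~\ref{finiteAlphabet} with $H$ in the role of the ambient free group: the centralizer of $\uu$ in $\gring{k}{H}$, which we have just identified with $C$, is contained in $\gring{k}{G}$. Hence every support of every element of $C$ lies in $G$, so $H=\big\langle\bigcup_{\vv\in C}\supp(\vv)\big\rangle\le G$; together with $G\le H$ this forces $H=G$. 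In particular $H$ is free of finite rank $p$ (and $B=\{e_1,\dots,e_p\}$).

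For the rank assertion I would argue by contradiction. If the image of $\supp(\uu)$ in $H\ab\cong\ZZ^{p}$ generated a subgroup of rank strictly less than $p$, its $\RR$-span in $\RR\tsr_\ZZ H\ab$ would be a proper subspace, so some nonzero homomorphism $\eta\colon H\to(\RR,+)$ would vanish on $\supp(\uu)$. Since $H$ is now known to be free of finite rank, Corollary~\ref{deg0}(b) applies with $H$ in place of $F$: for every $\vv\in C\without k$ the images of $\supp(\vv)$ and of $\supp(\uu)$ span the same $\QQ$-subspace of $\QQ\tsr_\ZZ H\ab$. Because $\eta$ annihilates $\supp(\uu)$, that common subspace lies in the kernel of the $\QQ$-linear extension of $\eta$, whence $\eta$ also annihilates $\supp(\vv)$. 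Letting $\vv$ range over all of $C$ (the scalar case being vacuous), $\eta$ vanishes on the generating set $\bigcup_{\vv\in C}\supp(\vv)$ of $H$, so $\eta\equiv0$ on $H$ --- a contradiction. Hence the image of $\supp(\uu)$ in $H\ab$ has rank $p=\rank H$.

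The one genuinely substantive step is the reduction in the second paragraph: one must notice that $C\subseteq\gring{k}{H}$ is automatic, so that Lemma~\ref{finiteAlphabet} can legitimately be invoked \emph{inside} $\gring{k}{H}$, and that its conclusion then feeds back to pin down $H=G$. After that, finite generation unlocks Corollary~\ref{deg0}(b) over $H$, and the rank statement is a one-line consequence.
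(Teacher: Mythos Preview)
Your proof is correct and follows essentially the same route as the paper's: pass to $\gring{k}{H}$, use Lemma~\ref{finiteAlphabet} to force $H$ to equal the finitely generated free factor containing $\supp(\uu)$, and then derive the rank statement by producing a nonzero homomorphism on $H\ab$ that would vanish on all supports. The only cosmetic difference is that you invoke Corollary~\ref{deg0}(b) where the paper uses Corollary~\ref{deg0}(a); since (b) is itself obtained from (a), this is not a genuine divergence.
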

\begin{proof} Being a subgroup of $F$, the group $H$ is
free; and the centralizer of $\mathbf{u}$ in $\gring{k}{H}$ is clearly again $C$.  Now $\mathrm{Supp}(\mathbf{u})$ is finite, hence contained
in a finitely generated free factor of $H$, hence by Lemma~\ref{finiteAlphabet}, that
free factor is all of $H$, so $H$ is finitely generated.  

If the subgroup of $H\ab$ generated by the image of $\mathrm{Supp}(\mathbf{u})$
in $H\ab$ had rank less than $\rank(H) = \rank(H\ab)$,
that image would lie in the kernel of a nonzero homomorphism
$H\ab\to\mathbb{Z}$, so by Corollary~\ref{deg0}a, the supports of all elements
of $C$ would lie in that kernel, contradicting the assumption that
they generate $H$.
\end{proof}

In the following corollaries and below, when we say that an element of $\AA$ is {\em supported on a cyclic group} we simply mean that its support is contained in a cyclic subgroup of $F$.
%This lemma immediately recovers the following fact which can also be proved directly. %REFERENCE? 
%Below, when we say that an element of $\AA$ is {\em supported on a cyclic group} we simply mean that its support is contained in a cyclic group.
\begin{cor} \label{Laurent}Suppose that $\uu\in \AA\without k$
is supported on a cyclic group.
%and $\supp(\uu)$ is contained in a cyclic subgroup of $F$.  
Then the centralizer of $\uu$ in $\AA$ is of the form $\gring{k}{Z}$ where $Z$ is the largest cyclic subgroup of $F$ supporting $\uu$.\qed
\end{cor}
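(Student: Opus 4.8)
The plan is to deduce this immediately from Proposition~\ref{prop:centralizerSupportGroup}. One inclusion is essentially free: since $\uu$ is supported on a cyclic, hence abelian, subgroup, the set $\langle\supp(\uu)\rangle$ is a nontrivial cyclic subgroup of $F$ (nontrivial because $\uu\notin k$), and a nontrivial element of a free group lies in a unique maximal cyclic subgroup, namely its centralizer, as recalled in Section~\ref{fga}; so there is a well-defined largest cyclic subgroup $Z$ of $F$ containing $\supp(\uu)$. Then $\uu\in\gring{k}{Z}$, and $\gring{k}{Z}$ is commutative, so every element of $\gring{k}{Z}$ commutes with $\uu$; hence $\gring{k}{Z}\subseteq C$.

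For the reverse inclusion, let $H$ be the centralizer-supporting group of $\uu$. By construction $C\subseteq\gring{k}{H}$, and $\supp(\uu)\subseteq H$ because $\uu\in C$. Proposition~\ref{prop:centralizerSupportGroup} tells us that $H$ is finitely generated and free and that the image of $\supp(\uu)$ in $H\ab$ generates a subgroup whose rank equals $\rank(H)$. But $\supp(\uu)$ sits inside the cyclic group $Z$, so its image in $H\ab$ lies in the (cyclic) image of $Z$ in $H\ab$ and therefore spans a subgroup of rank at most $1$; hence $\rank(H)\le 1$. Moreover $\rank(H)\neq 0$, for otherwise $H$ would be trivial and every element of $C$ would be a scalar, contradicting $\uu\in C\without k$. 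So $\rank(H)=1$ and $H$ is infinite cyclic.

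Now $H$ is a cyclic subgroup of $F$ containing $\supp(\uu)$, so by the maximality defining $Z$ we get $H\subseteq Z$, whence $C\subseteq\gring{k}{H}\subseteq\gring{k}{Z}$. Together with $\gring{k}{Z}\subseteq C$ from the first paragraph this yields $C=\gring{k}{Z}$, as claimed.

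I do not expect a serious obstacle here, since the real content has already been packaged into Proposition~\ref{prop:centralizerSupportGroup}. The only points requiring a little care are the rank bookkeeping — that the image in $H\ab$ of a subset contained in a cyclic subgroup spans a subgroup of rank $\le 1$, and that $\rank(H)=1$ forces $H$ itself to be cyclic — together with the preliminary observation that ``the largest cyclic subgroup supporting $\uu$'' exists and is unique.
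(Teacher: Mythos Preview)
Your proof is correct and follows exactly the route the paper intends: the corollary is marked with \qed\ because it drops out of Proposition~\ref{prop:centralizerSupportGroup} via the rank argument you give. One tiny remark: the fact that a nontrivial element of $F$ has a unique maximal cyclic overgroup (its centralizer) is recalled in Section~\ref{centralizerBasics}, not Section~\ref{fga}; and when you invoke ``the image of $Z$ in $H\ab$'' you are implicitly using $Z\subseteq H$, which indeed follows from your first paragraph since $Z\subseteq\gring{k}{Z}\subseteq C$ forces every $z\in Z$ to lie in $H$.
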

\begin{cor}\label{cyclic}  Suppose that $\uu\in \AA\without k$ is {\em not} supported on a cyclic group.  %Let $C$ be the centralizer of $\uu$ and $H$ be the subgroup of $F$ generated the union of the supports of all elements of $C$.   
Then the image of $\supp(\uu)$ in $H\ab$, the abelianization of $H$, generates a subgroup of $H\ab$ whose rank is at least $2$.  \qed
\end{cor}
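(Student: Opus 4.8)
The plan is to deduce this directly from Proposition~\ref{prop:centralizerSupportGroup}, which already does the real work: it asserts that the subgroup of $H\ab$ generated by the image of $\supp(\uu)$ has rank equal to $\rank(H)$. So the entire task reduces to showing that $\rank(H)\geq 2$ under the hypothesis that $\uu$ is not supported on a cyclic group.

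To see this, I would argue by contradiction on the value of $\rank(H)$. First observe that $\uu$ commutes with itself, so $\uu\in C$ and therefore $\supp(\uu)\subseteq H$. If $\rank(H)=0$, then $H$ is trivial, which forces $\supp(\uu)\subseteq\{1\}$, i.e. $\uu\in k$, contradicting $\uu\in\AA\without k$. If $\rank(H)=1$, then $H$, being a free group of rank $1$, is infinite cyclic; hence $\supp(\uu)$ is contained in a cyclic subgroup of $F$, contradicting the standing assumption. Thus $\rank(H)\geq 2$, and by Proposition~\ref{prop:centralizerSupportGroup} the image of $\supp(\uu)$ in $H\ab$ generates a subgroup of rank $\rank(H)\geq 2$, as claimed.

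I do not expect any genuine obstacle here: the substantive content (finite generation of $H$, and the rank equality, which rests on Lemma~\ref{finiteAlphabet} and Corollary~\ref{deg0}(a)) has been isolated in Proposition~\ref{prop:centralizerSupportGroup}. The only point needing a moment's attention is the trivial remark that $\uu\in C$, hence $\supp(\uu)\subseteq H$, which is what lets the two degenerate cases $\rank(H)\in\{0,1\}$ be ruled out.
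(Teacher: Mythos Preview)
Your argument is correct and is exactly the intended one: the paper marks this corollary with \qed\ because it is immediate from Proposition~\ref{prop:centralizerSupportGroup}, and your contradiction on $\rank(H)\in\{0,1\}$ (using $\supp(\uu)\subseteq H$) is precisely how one fills in that gap.
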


We now arrive at our main result.
\begin{thm}\label{theorem} \label{thm}Suppose that $\uu\in \AA$ 
is {\em not} supported on a cyclic group.
%and $\supp(\uu)$ is not contained in a cyclic subgroup of $F$. 
Let $C$ be the centralizer of $\uu$.   Then $C$ is the affine coordinate ring of the complement of a $k$-point in a proper nonsingular curve over $k$.

%Then $C$ is a commutative, integrally closed (in its field of fractions) integral domain, and is a finite integral extension of $k[\uu]$ in which the valuation of $k[\uu]$ at $\infty$ is totally ramified. 
\end{thm}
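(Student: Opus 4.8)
The plan is to verify condition~(i) of Lemma~\ref{characterization} for $C$ and then invoke the equivalence (i)$\Leftrightarrow$(iv). Concretely, I would produce a group homomorphism $h\colon F\to(\RR,+)$ whose associated $h$-degree, restricted to $C$, is a discrete degree function over $k$ that is non-negative on $C\wozero$ and satisfies the total-ramification identity~(\ref{eq:totallyRamified}). By Lemma~\ref{lem:val} we already know that, as soon as $h$ does not vanish on $C\wozero$, the restriction $h|_C$ is automatically discrete and satisfies~(\ref{eq:totallyRamified}); so the entire problem is to choose $h$ so that in addition $h|_C\ge 0$.

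First I would set up the geometry. By Corollary~\ref{cyclic} and Proposition~\ref{prop:centralizerSupportGroup}, the centralizer-supporting group $H$ of $\uu$ is free of finite rank $d\ge 2$, the image of $\supp(\uu)$ in $H\ab$ spans $H\ab\tsr_\ZZ\QQ$, and $C$ is still the centralizer of $\uu$ in $\gring{k}{H}$; thus I may replace $F$ by $H$ and work inside $\gring{k}{H}$. Let $q\colon H\to H\ab\tsr_\ZZ\RR\cong\RR^d$ be the canonical map, and for $\vv\in\gring{k}{H}\wozero$ let $N(\vv)\subseteq\RR^d$ be the convex hull of $q(\supp(\vv))$. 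The homomorphisms to be considered are $h=\bar h\circ q$ for $\bar h$ a linear functional on $\RR^d$, and then $h(\vv)=\max_{g\in\supp(\vv)}h(g)=\max_{N(\vv)}\bar h$. By Corollary~\ref{deg0}(b), for every $\vv\in C\without k$ the set $q(\supp(\vv))$ spans $\RR^d$, so $N(\vv)$ is contained in no linear hyperplane; in particular $N(\uu)$ is not a single point.

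The key step is to choose $h$ so that $h(\uu)>0$ \emph{and} the leading $h$-homogeneous component $\uu_h$ is not a monomial. Granting this, I would argue that $h|_C$ is non-negative: if some $\vv\in C\wozero$ had $h(\vv)<0$, then from $\uu\vv=\vv\uu$ and the product rule~(\ref{eq:key}) we get $\uu_h\vv_h=(\uu\vv)_h=(\vv\uu)_h=\vv_h\uu_h$, so $\uu_h$ and $\vv_h$ are commuting $h$-homogeneous elements of $h$-degrees $>0$ and $<0$ respectively; Corollary~\ref{deg0}(d) then forces both to be monomials, contradicting the choice of $\uu_h$. Since $h(\uu)>0$ also shows $h$ does not vanish on $C\wozero$, Lemma~\ref{lem:val} applies and $h|_C$ is the degree function we want, so Lemma~\ref{characterization} finishes the proof.

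To see that such an $h$ exists is a short convex-geometry point about $N:=N(\uu)$, which spans $\RR^d$ (hence has dimension $\ge 1$). If $N$ is contained in an affine hyperplane $\pi$, then $0\notin\pi$ (otherwise $N$ would lie in a linear hyperplane), so the linear functional constant on $\pi$ can be scaled to take a value $c>0$ on $\pi$; for the resulting $h$ one has $\uu_h=\uu$, which is not a monomial because $\uu$ is not supported on a cyclic group, while $h(\uu)=c>0$. If instead $N$ is full-dimensional, I would write $N$ as the intersection of the closed half-spaces bounded by its facets; since $N$ is bounded, $\bigcap_{\Phi}\{x:\bar h_\Phi(x)\le 0\}$ (the recession cone of $N$) is $\{0\}$, so if every facet $\Phi$ satisfied $\max_N\bar h_\Phi\le 0$ for its outer normal $\bar h_\Phi$ then $N\subseteq\{0\}$, contradicting $\dim N=d$; hence some facet $\Phi$ has $\max_N\bar h_\Phi>0$. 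Taking $h=\bar h_\Phi\circ q$, the component $\uu_h$ is supported on the $g\in\supp(\uu)$ with $q(g)\in\Phi$, which include the $\ge d\ge 2$ vertices of $\Phi$ (each a point of $q(\supp(\uu))$), so $\uu_h$ is not a monomial. The main obstacle in the whole argument is precisely this selection of direction $h$: one has to recognize that ``$\uu_h$ is not a monomial'' is exactly the hypothesis that, through Corollary~\ref{deg0}(d), converts a hypothetical negative value of $h$ on $C$ into a contradiction — after which the existence of such an $h$ is the elementary polytope argument above, and everything else (the passage to $H$, discreteness and total ramification via Lemma~\ref{lem:val}, and the closing appeal to Lemma~\ref{characterization}) is routine bookkeeping.
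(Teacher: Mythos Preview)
Your argument is essentially the paper's own: pass to the centralizer-supporting group, choose $h$ so that $h(\uu)>0$ and $\uu_h$ is not a monomial, use Corollary~\ref{deg0}(d) to rule out negative $h$-values on $C\wozero$, and then invoke Lemma~\ref{lem:val} and Lemma~\ref{characterization}. Your convex-geometry step is a minor variant---the paper takes the convex hull of $q(\supp(\uu))\cup\{0\}$ and uses that the facets of a bounded polytope have empty common intersection to find a facet avoiding $0$, while you work with $N=\mathrm{conv}\,q(\supp(\uu))$ and split on whether $N$ is full-dimensional---but both produce the same kind of $h$.

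One point you should add: Lemma~\ref{characterization}(iv) only asserts that the point $p$ is regular, not that the whole curve $\thecurve$ is nonsingular. The paper closes this by recalling (Section~\ref{centralizerBasics}) that $C$ is integrally closed, so $\thecurve\setminus\{p\}=\Spec C$ is nonsingular as well; without that sentence your appeal to Lemma~\ref{characterization} does not quite deliver the ``nonsingular'' in the theorem's conclusion.
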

\begin{proof}
As discussed in Section~\ref{centralizerBasics}, $C$ is a commutative integrally closed integral domain.
Our proof will consist of showing that $C$ admits a discrete degree function $d$ which satisfies condition (i) of Lemma~\ref{characterization}.   For in that case, that $C$ is integrally closed will imply that the curve $\thecurve$ in condition (iv) of the lemma is nonsingular.
  
 Since $\supp(\uu)$ is not contained in a cyclic group, by Corollary~\ref{cyclic}, we can, if necessary, replace $F$ by the centralizer-supporting group of $\uu$ (cf. above Proposition~\ref{prop:centralizerSupportGroup}) and thereafter assume that the image of $\supp(\uu)$ in $F\ab$, the abelianization of $F$, generates a subgroup of rank at least $2$.  
 
 Under this assumption, we will construct a group homomorphism $h: F\to (\RR,+)$, (cf. Section~\ref{degFun}) with respect to which $h(\uu)>0$ and the highest degree homogeneous component $\uu_h$ of $\uu$ is not a monomial (cf. Section~\ref{degFun}).  Although it is not necessary, we can make the resulting degree function $h$ discrete without extra effort. 
 
 We will use the following geometric fact: If $K$ is a bounded convex polytope (=the convex hull of a finite set in $\RR^s$), the intersection of all the facets (=faces of codimension $1$) of $K$ is empty.   
 
 To prove this fact, suppose $O\in \RR^s$ belongs to every facet of $K$.  Let us translate $K$ so that $O$ is at the origin.  We can assume without loss of generality that $K$ is not contained in a proper subspace of $\RR^s$ for otherwise we could pass to such a subspace.   
 
 Since $K$ is convex, it is an intersection of finitely many half-spaces.  In other words, there are finitely many $\RR$-linear forms $l_i: \RR^s\to \RR$ and constant terms $b_i\in\RR$, such that
\begin{equation*}
K=\left\{ x\in \RR^s \Big| (\forall i)\;\;l_i(x)\leq b_i\right\}.
\end{equation*}
 Assuming that none of the inequalities $l_i(x)\leq b_i$ above are redundant, facets  of $K$ are the sets
$Y_i = \left\{ x\in K \Big|  l_i(x) =  b_i\right\}.$
% Write $K$ as an intersection of a minimal number of half-spaces:
%\begin{equation}
%K=\left\{ x\in \RR^s \Big| (\forall i) \sum_{j=1}^{s} a_{ij} x_j\leq b_i\right\}.
%\end{equation}
%Now facets are the sets
%$Y_i = \left\{ x\in K \Big|  \sum_{j=1}^{s} a_{ij} x_j =  b_i\right\}.$
Since $O$ belongs to every facet of $K$, it follows that $b_i= 0$ for all $i$.  Hence $K$ is a cone with the apex at $0$ in the sense that 
 \[ \lambda>0,\; x\in K\implies \lambda x \in K.\]
 This contradicts boundedness of $K$.
 
 %We will use the following geometric fact:  Given a convex polytope $P$ and a point $p$ in the $\RR$-vector space $V$ spanned by $P$,
Let $F\ab$ be the abelianization  of $F$, with the group operation written additively.  Let 
\[q: F\to \RR\tsr_\ZZ F\ab\]
be the canonical homomorphism.  

We are now ready to construct $h$ with the properties stated in the 3rd paragraph of this proof.  Let $V$ be the $\RR$-span of $q(\supp(\uu))$ in $\RR\tsr _\ZZ F\ab$ and let $K$ be the bounded polytope which is the convex hull of 
\[q(\supp(\uu))\cup\{0\}.\]
By our assumption, $\dim K=\dim V\geq2$.   By the above fact about bounded convex polytopes, $K$ has a facet $Y$ which does not contain the origin $0$.   Choose  an $\RR$-linear form $l:V\to \RR$ and $b\in \RR$ so that
\begin{equation}\label{eq:facetY}
l\leq b \mbox{ on } K \mbox{ and } l=b \mbox{ on } Y.
\end{equation}
It follows that $l<b$ on $K\without Y$, and since $0\in K\without Y$,  we get $0< b$.   

Now extend $l$ to an $\RR$-linear form $\eta:\RR\tsr_\ZZ F\ab\to \RR$ anyhow and define
\[h= \eta\circ q.\]
%Now let $h$ be any homomorphism $F\to(\RR,+)$ which is a positive constant on the hyperplane $H$.  
 
Let us verify that $h$ has the desired properties.  We continue working inside $V$.  % Recall that %Since $l=b$ is a supporting hyperplane of $K$, is either contained in the half-space $l\leq b$ or $l\geq b$.   But since $0\in K$, and $l(0)=0< b$,  
%\[l\leq b \mbox{ on } K \mbox{ and } l=b \mbox{ on } Y.\]  
 Since $K$ is the convex hull of $q(\supp(\uu))\cup\{0\}$, and $0\not\in Y$, it follows that the vertices of $Y$ are in $q(\supp(\uu))$.  Therefore, (\ref{eq:facetY}) implies that
  \[h(\uu)=\max\{ h(u)\big| u\in \supp(\uu)\}=\max\{ l(x)\big|  x\in q(\supp(\uu)) \}= b>0.\]
Now, if we write
\[\uu=\sum_{u} a_u u,\]
  we get 
 \[\quad \uu_h = \sum_{q(u)\in Y} a_u u.\]
Hence $\supp(\uu_h)= \{u\in \supp(\uu) \big| q(u)\in Y\}$, which implies that  the vertices of $Y$ are not only in $q(\supp(\uu))$, but also in $q(\supp(\uu_h))$.
Since $\dim K\geq 2$, we have $\dim Y\geq 1$, and thus $Y$ has at least $2$ vertices. % These vertices are images under $q$ of elements of $\supp(\uu_h)$.   %Therefore $\big|q(\supp(\uu))\cap Y\big|\geq 2$.  
%Therefore, there are at least two elements in $\supp(\uu_h)$ and hence $\uu_h$ is not a monomial.
Therefore, $\supp(\uu_h)$ contains at least $2$ elements, or, in other words, $\uu_h$ is not a monomial.
 
%Since the hyperplane contains a face of the convex hull of $\supp(\uu)\ab$, a polytope of dimension at least $2$, $\supp(\uu_h)$ must contains at least $2$ distinct (in fact distinct even after being mapped to $F\ab$) elements and hence $\uu_h$ is not a monomial.  
 
We now define our degree function $d$ to be the restriction of $h:\gring{k}{F}\to \RR\cup\{\infty\}$ to $C$.   Let us verify that $d$ satisfies condition (i) of Lemma~\ref{characterization}. 
Lemma~\ref{lem:val} applies and only leaves us to show that $h\geq 0$ on $C\wozero$.  So let $\ww\in C\wozero$.  Since $\ww_h$, the leading homogeneous component of $\ww$, commutes with $\uu_h$ and $h(\uu_h)>0$, by part (d) of Corollary~\ref{deg0}, $h(\ww)\geq0$.
\end{proof}
The following says that in the special case of a homogeneous element, the nonsingular curve in the conclusion of the theorem is a line.
\begin{prop}\label{prop:hg} If $\uu$ is homogeneous of non-zero degree, and not a monomial, then its centralizer is isomorphic to the polynomial ring $k[t]$.
\end{prop}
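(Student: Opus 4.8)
The plan is to reduce this to the already-proven facts, using that $\uu$ is $h$-homogeneous for the grading $h$ coming from some homomorphism $F\to(\RR,+)$ that certifies homogeneity. First I would observe that since $\uu$ is homogeneous of nonzero degree and not a monomial, its support is not contained in a cyclic subgroup of $F$: a homogeneous element supported on a cyclic group $\langle r\rangle$ would, after applying the degree homomorphism, have all its support elements at the same $h$-value, forcing all exponents of $r$ to coincide (since $h(r)\neq 0$ as the degree is nonzero), hence $\uu$ would be a monomial. Therefore Theorem~\ref{theorem} applies and $C$ is the affine coordinate ring of the complement of a $k$-point in a proper nonsingular curve over $k$; equivalently, $C$ satisfies the equivalent conditions of Lemma~\ref{characterization}.

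Next I would exploit homogeneity to pin down the degree function more tightly. Since $\uu$ is $h$-homogeneous of nonzero degree, I may assume (replacing $h$ by $-h$ if necessary) that $h(\uu)>0$, and $\uu_h=\uu$. By Lemma~\ref{lem:val}, $h$ restricted to $C$ is a discrete degree function over $k$ satisfying property~(\ref{eq:totallyRamified2}) — provided $h$ does not vanish on $C\wozero$, which holds because $h(\uu)>0$ and $\uu\in C$. The key extra input from homogeneity is this: for every $\vv\in C\wozero$, the leading homogeneous component $\vv_h$ commutes with $\uu=\uu_h$, and by Proposition~\ref{hg} (applied when $h(\vv)\neq 0$) together with Corollary~\ref{deg0}(a) (when $h(\vv)=0$), we find $\vv_h\in k\cdot\uu^{h(\vv)/h(\uu)}$ whenever that exponent is a nonnegative integer; more to the point, I claim $C$ is itself a graded subalgebra of $\AA$, i.e. $C=\bigoplus_a C\cap\AA_a$. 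Indeed if $\vv\in C$ then $\uu\vv=\vv\uu$ forces each homogeneous component of $\vv$ to commute with $\uu$ (compare homogeneous components of both sides, using that $\uu$ is homogeneous), so every homogeneous component of $\vv$ lies in $C$.

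Now the endgame. Let $m>0$ be the smallest positive value of $h$ on $C\wozero$ (it exists and generates $h(C\wozero)$ by discreteness), and pick $t\in C$ homogeneous with $h(t)=m$. By Proposition~\ref{hg}, the homogeneous component of $C$ in each degree $nm$ ($n\geq 0$) is at most one-dimensional over $k$: if $\vv,\ww\in C$ are both homogeneous of degree $nm$ they commute (both commute with $\uu$, and $C$ is commutative) so are proportional; and it is nonempty since $t^n\in C$ has degree $nm$. Meanwhile the only degrees occurring are nonnegative multiples of $m$: nonnegativity is condition~(i) of Lemma~\ref{characterization} as established in the proof of Theorem~\ref{theorem} (via Corollary~\ref{deg0}(d)), and the degrees are multiples of $m$ by minimality/discreteness. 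Hence $C=\bigoplus_{n\geq 0}k\,t^n=k[t]$, with $t$ transcendental over $k$ because $C$ is infinite-dimensional (it contains $\uu$ of arbitrarily large degree powers).

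The step I expect to be the main obstacle is establishing cleanly that $C$ is a graded subalgebra and that each graded piece in a degree that is a multiple of $m$ is exactly one-dimensional and occupied by a power of $t$ — in particular, ruling out that $C$ has homogeneous elements in degrees that are not multiples of $m$, for which one leans on discreteness from Lemma~\ref{lem:val}. Once the grading is under control, the conclusion $C\cong k[t]$ is immediate, and it is consistent with (indeed sharper than) the general conclusion of Theorem~\ref{theorem}: the curve $\thecurve$ is $\mathbb{P}^1_k$ and the removed point is the point at infinity where $t$ has its pole.
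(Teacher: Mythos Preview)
Your outline is essentially the paper's: reduce to Theorem~\ref{theorem}, note that $C$ is graded by $h$ (since $\uu$ is homogeneous), pick a homogeneous $\tt$ of least positive $h$-degree, and show every homogeneous element of $C$ is a scalar times a power of $\tt$. But there is a real gap at precisely the step you flag as the main obstacle. You assert that the least positive value $m$ of $h$ on $C\wozero$ ``generates $h(C\wozero)$'' and that all occurring degrees are multiples of $m$ ``by minimality/discreteness.'' Discreteness (Lemma~\ref{lem:val}) only says $h(C\wozero)$ lies in some cyclic group $c\ZZ\subset\RR$; it does \emph{not} force the value semigroup to equal $m\ZZ_{\geq 0}$. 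A priori $h(C\wozero)$ could be a proper numerical semigroup such as $\{0,2c,3c,4c,\dots\}$, whose least positive element $2c$ does not divide $3c$. So discreteness and minimality alone do not rule out a homogeneous element whose degree is not divisible by $m$.

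The paper closes this gap using the integral closedness of $C$ (Section~\ref{centralizerBasics}), a fact you already have available since Theorem~\ref{theorem} yields a nonsingular curve. For a homogeneous $\vv\in C$ of positive degree one chooses coprime positive integers $p,q$ with $h(\vv^p)=h(\tt^q)$; property~(\ref{eq:totallyRamified}) together with homogeneity gives $\vv^p=\lambda\tt^q$ for some $\lambda\in k$. Because $C$ is integrally closed, this relation forces $\vv$ and $\tt$ to be scalar multiples of powers of a common element of $C$; minimality of $h(\tt)$ then makes that element $\tt$ itself, so $p=1$ and $\vv\in k\,\tt^q$. Once this integral-closure step replaces your appeal to ``minimality/discreteness,'' your argument and the paper's coincide.
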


\begin{proof}After replacing the group homomorphism $h:F\to(\RR,+)$ by $-h$ if necessary, we can assume without loss of generality that $h(\uu)>0$.   Let $C$ be the centralizer of $\uu$.  As $\uu$ is homogeneous of non-zero degree and yet not a monomial, $u$ cannot be supported on a cyclic group.  Thus from Theorem~\ref{thm} we know that $C$ satisfies the equivalent conditions of Lemma~\ref{characterization}.  Moreover,  Remark~\ref{rem:uniquePlace} (or the proof of the  theorem) shows that under our assumptions about $\uu$, the restriction of $h$ to $C$ can be used as the degree function $d$ that appears in the lemma.  Since $\uu$ is homogeneous, $C$ is in fact graded by $h$: indeed, if $\vv$ commutes with $\uu$, so does each homogeneous component of $\vv$. 

Let $\tt\in C$ be a homogeneous element of the smallest positive degree.  Let $\vv\in C$ be homogeneous.  If $h(\vv)=0$, by property~(\ref{eq:totallyRamified}) in Lemma~\ref{characterization}, $\vv\in k$.  Otherwise, since $h$ is discrete on $C$, there are coprime positive integers $n$ and $m$ for which $h(\vv^n)= h(\tt^m)$.  Then by property~(\ref{eq:totallyRamified}) again and homogeneity it follows that for some $\lambda\in k$, $\vv^n = \lambda \tt^m$.   Now $C$ is integrally closed (cf. Section~\ref{centralizerBasics}), therefore the equation $\vv^n = \lambda \tt^m$ implies that both $\vv$ and $\tt$ are scalar multiples of powers of
an element in $C$, and since $\tt$ has the smallest positive degree possible, that element can be taken to be $\tt$.   Thus $\vv= \lambda \tt^m$ and since $\vv$ was an arbitrary homogeneous element of $C$, we conclude that $C=k[\tt]$. Finally, by Lemma~\ref{characterization}, $\tt$ is transcendental over $k$.
%Now let $t$ be an indeterminate and $f(t)= a_0+a_1t +\dots + t^s$ a non-zero polynomial over $k$.  Then $f(\tt)= a_0+ a_1\tt+\dots +  \tt^s \neq 0$ because, using Remark~\ref{rem:isosceles}, we obtain $h(f(\tt))=sh(\tt)\neq h(0)=-\infty$. Therefore $C=k[\tt]$ is isomorphic to the polynomial ring $k[t]$.
\end{proof}
\section{Open Question}
Recall that in the Introduction~\ref{intro} we divided G. Bergman's proof of his theorem into two steps.  By analogy with the second step,  we would like to ask the following:

\begin{que} Does every finitely generated subalgebra $R\neq k$ of a free group algebra $\AA$ over $k$ admit a $k$-algebra homomorphism $f:R\to k[t,t^{-1}]$ into the ring of Laurent polynomials of a single variable  $k[t,t^{-1}]\cong k[\ZZ] $ which is nontrivial in the sense that  $f(R)\neq k$?
\end{que}

%\newpage
%%%%%%%%%%%%%%%%%%%%%%%%%%%%%%%%%%%%%%%%%%%%%%%%%%%%%%%%%%%%%%%%%%%%%%%%%%
%
 %SECTION
%
%%%%%%%%%%%%%%%%%%%%%%%%%%%%%%%%%%%%%%%%%%%%%%%%%%%%%%%%%%%%%%%%%%%%%%%%%%
\section{Proof of Lemma~\ref{characterization}}\label{sec:proofCurves}
By a {\em function field of one variable} over $k$ we mean a finitely generated field extension $K/k$ of transcendence degree $1$.   By a {\em place} of $K/k$ we mean an equivalence class of discrete valuations (cf.~\cite[Ch 3, Definition 3.22]{Liu}) on $K$ which are trivial on $k$.

Recall that if $K$ and $L$  are  function fields of one variable over $k$, and $K\subset L$, then for any place $q$ of $K$,
  \begin{equation}\label{eq:funId}
  [L:K]= \sum_{p} e_{p/q}\, [k(p): k(q)]
  \end{equation}
where the summation is over all places $p$ above $q$, $e_{p/q}$ is the ramification index (cf.~\cite[Ch 7, Exercise 1.8]{Liu})  at $p$ and $k(p)$ and $k(q)$ are the residue fields at $p$ and $q$ resp. (cf~\cite[(4.8), p. 290)]{Liu}).   Clearly for any place $p$ above $q$,
\begin{equation}\label{eq:ramvsdeg}
e_{p/q}\leq [L:K].
\end{equation} 
When we say that $q$ is {\em totally ramified} in $L$  we mean that for some $p$ above $q$, 
\[e_{p/q}= [L:K].\]
Formula (\ref{eq:funId}) shows that this happens if and only if there is a sole place $p$ above $q$ and moreover $k(p)=k(q)$.  

\begin{proof}[Proof of Lemma~\ref{characterization}]$ $

$(i)\implies (ii)$: 
By property~(\ref{eq:totallyRamified}), all elements of $C$ of degree $0$ are in $k$.   Since we assume that $C\neq k$, it follows that $d$ is not identically zero on $C\wozero$.  Since $d$ is discrete, we can normalize $d$ so that it maps $C\wozero$ into $\ZZ$ but not into any proper subgroup of $\ZZ$.  Also, since $d(xy)= d(x)+d(y)$, $d(C\wozero)$ is closed under addition.  Given a set of positive integers $x_1,\dots,x_s$ with $g.c.d.(x_1,\dots, x_s)=1$, it is well-known that all sufficiently large integers can be written as linear combinations of the $x_i$ with non-negative integer coefficients.  This is best known in the context of the coin problem, which asks for the largest integer, known as the Frobenius number, which is {\em not} equal to such a linear combination (see for example, ~\cite{Erdos}).  Thus, by the finiteness of the Frobenius number, $d(C\wozero)$ contains all large enough integers.

Let $x\in C\without{k}$ and denote $n:= d(x)$.  Note that $n>0$.

$x$ is transcendental:  
 
Suppose $f(t)= a_0+a_1t +\dots + t^s$ is a non-zero polynomial over $k$.  Then $f(x)= a_0+ a_1x+\dots +  x^s \neq 0$ because, using Remark~\ref{rem:isosceles}, we obtain $d(f(x))=ns\neq d(0)=-\infty$.
  
$C$ is a finitely generated $k[x]$-module:  

We are reproducing here the argument in the proof of ~\cite[Proposition 2.2]{Bergman}. Let $e_1,\dots, e_n\in C$ be chosen so that each congruence class modulo $n$ contains exactly one $d(e_i)$ and this is equal to the minimal member of this class in  $d(C\wozero)$.  This is possible because we assume that $d\geq 0$ on $C\wozero$.  Let $M$ be the $k[x]$-module generated by $e_1,\dots, e_n$.  We claim that $M=C$.  Suppose the contrary and let $y\in C\without M$ be of the least $d$-degree.   Let $j$ be such that 
\(d(y)\equiv d(e_j)\; \;(\!\!\!\!\mod n).\)  
Then for some $s\geq 0$, $d(y)=d(x^s e_j)$ and by property~(\ref{eq:totallyRamified}), for some $\lambda\in k$, 
\[d(y-\lambda x^s e_j)<d(y).\]
Since $y$ has the least degree for an element outside of $M$,  $m= y-\lambda x^s e_j\in M$, and thus $y= m+\lambda x^s e_j\in M$, a contradiction.  Thus $M=C$ and therefore the $k[x]$-module $C$ is generated by $e_1,\dots, e_n$.
%For any $f_1,\dots, f_n \in k[u]$,  $d(f_1e_1),\dots, d(f_ne_n)$ are distinct, hence by Remark~\ref{rem:isosceles}, 
%\[d\left(\sum_{i=1}^n f_i e_i\right)=\max\{d(f_i e_i)\}\neq -\infty\]
% so $\sum_{i=1}^n f_i e_i\neq 0$ and we conclude that $e_1,\dots e_n$ is a basis for $C$ as a free $k[u]$-module.

$\infty$ is totally ramified in $\Frac C$:  

Let $v_\infty$ denote the valuation of $k(x)$ at $\infty$.   Thus for any polynomial $f\in k[x]$,  
\[v_\infty(f) = -\deg f.\]
 Let $v$ denote the valuation on $\Frac C$ which coincides with $-d$ on $C$ (cf. Remark~\ref{rem:valuations}) and let $p$ denote the corresponding place of $\Frac C$.   Since $d$ maps $C\wozero$ into $\ZZ$ but not into any of its proper subgroups,  our $v$ sends $\Frac C\wozero$ surjectively onto $\ZZ$, i.e. $v$ is normalized.   Applying Remark~\ref{rem:isosceles}, $\forall f\in k[x]$,
 \[d(f) = n \deg f,\quad \mbox{ hence }   \quad v(f) =n\,v_\infty(f).\]
 Thus $v$ is an extension of $v_\infty$ to $\Frac C$ with ramification index $e_{p/\infty}=n$.  
  
 Since $C$ is generated as a $k[x]$-module by the $n$ elements $e_1,\dots, e_n$,  
 \[  e_{p/\infty}=n\geq [\Frac C: k(x)].\] 
After comparing this with (\ref{eq:ramvsdeg}), we conclude that  $\infty$ is totally ramified in $\Frac C$ and get as a side consequence that $e_1,\dots, e_n$ is a basis for $C$ as a free $k[x]$-module.
 
$(ii)\implies (iii)$:  Trivial.

$(iii)\implies (iv)$:   Let $U=\Spec C$.  Let $\thecurve$ be the proper completion of $U$ obtained by adding to $U$ the regular points corresponding to all the places of $\Frac C$ which are not over any point in $U$.  This is the {\em smallest proper completion of $U$} (cf.~\cite[Section 4.1 Exercise 1.17]{Liu} in the sense that any morphism from $U$ to a proper scheme  $Y$ %factors uniquely through the inclusion $U\to \thecurve$. 
extends uniquely to a morphism $\thecurve\to Y$.   The curve $\thecurve$ is reduced and irreducible.

The element $x$ determines a morphism of algebraic curves over $k$:  
\[\phi :U \to \mathbb{A}^{1}_k.\]

 As $\mathbb{A}^1_k\hookrightarrow\mathbb{P}^1_k$, and $\mathbb{P}^1_k$ is proper, $\phi$ extends uniquely to a morphism $\phi : \thecurve\to \mathbb{P}^1_k$: 
\begin{equation}
 \begin{tikzcd}
U \arrow[r, hook] \arrow[d, "\phi"]
& \thecurve \arrow[d, "\phi" ] \\
\mathbb{A}^1_k \arrow[r, hook]
& \mathbb{P}^1_k
\end{tikzcd}
\end{equation}
 Clearly $\phi^{-1}(\infty)\subseteq \thecurve\without U$, so $\phi^{-1}(\infty)$ consists of regular points.  Thus the local rings of points in the set $\phi^{-1}(\infty)$ are valuation rings of places of $\Frac C$ above $\infty$.    Since $\infty$ is totally ramified in $\Frac C$, by (\ref{eq:funId}), there is a unique place of $\Frac C$ above $\infty$, and moreover the residue field at that place is $k$.   Thus $\phi^{-1}(\infty)$  consists of a unique point $p\in \thecurve(k)$.
 
It remains to show that $U=\thecurve\without\{p\}$.  As we are about to see, this follows from the finiteness of $k[x]\inject C$.   We have
\[ U\inject \thecurve\without\{p\}\to \mathbb{A}^1_k\]
which gives
\[k[x]\inject \Ocal_\thecurve(\thecurve\without\{p\})\inject C.\]
Since $k[x]\inject C$ is finite, so is 
\begin{equation}\label{eq:finite}
\Ocal_\thecurve(\thecurve\without\{p\})\inject C.
\end{equation}
  Here we will use the following important fact:  Any curve over $k$ which is not proper is affine (\cite[Sec. 7.5, Ex. 5.5]{Liu}).   By this fact $\thecurve\without\{p\}$ is affine, and it follows from finiteness of~(\ref{eq:finite}) that $U\inject \thecurve\without\{p\}$ is finite and hence integral.   Hence, by the Going-up theorem for integral homomorphisms, it is  surjective.  Thus $U= \thecurve\without\{p\}$.  
  
$(iv)\implies (v)$:  Let $v$ denote the discrete valuation at $p$.   Suppose $v(x)\geq 0$ for some $x\in C$.   Then $x$ is a global section of $\thecurve$ and since $\thecurve$ is proper, it follows (see for example~\cite[Sec. 7.3.2, Corollary 3.18]{Liu}) that $v(x)=0$.  Thus $v$ is non-positive on $C$.    The residue field of $v$ is $k$ because $p$ is $k$-rational.

$(v)\implies (i)$:

Define $d$ to be the restriction of $-v$ to $C$.  From the definition of a discrete valuation it follows that $d$ is a discrete degree function (cf. Remark~\ref{rem:valuations}).
It remains to show property~(\ref{eq:totallyRamified}), which, when expressed in terms of $v$, becomes: 
  \begin{equation}\label{eq:tr2}
 (\forall x,y\in C\wozero)\quad    v(x)= v(y)                \implies \left(\exists \lambda \in k\right)\;\; v(x-\lambda y)>v(x).
\end{equation}
Let $R$ be the local ring of  $v$ and $t$ be a uniformizer, i.e, an element of $R$ with $v(t)=1$.  The quotient $R/tR$ is the residue field.  Thus $R/tR=k$.  We have for all $n$: 
\[v(x)\geq n \iff x\in Rt^n\]
\begin{equation}\label{eq:grdim1}
Rt^{n}\big/ Rt^{n+1}\cong R\big/ Rt = k.
\end{equation}
Hence property~(\ref{eq:tr2}) holds for all $x,y \in\Frac C$.  

The isomorphism (\ref{eq:grdim1}) also shows that every element $x\in\Frac C$ can be expanded into a Laurent series in $t$ with coefficients in $k$.  Then $v(x)$ is the degree of the lowest term.   Viewed from this vantage point, property~(\ref{eq:tr2}) is clear.
\end{proof}
%%%%%%%%%%%%%%%%%%%%%%%%%%%%%%%%%%%%%%%%%%%%%%%%%%%%%%%%%%%%%%%%%%%%%%%%%%%
%
% SECTION
%
%%%%%%%%%%%%%%%%%%%%%%%%%%%%%%%%%%%%%%%%%%%%%%%%%%%%%%%%%%%%%%%%%%%%%%%%%%%

\bibliography{mybibfile}

\begin{thebibliography}{Coh06}

\bibitem[Ber69]{Bergman}
G.~Bergman.
\newblock Centralizers in free associative algebras.
\newblock {\em Trans. Am. Math. Soc.}, pages 327--344, 1969.

\bibitem[Coh64]{CohnSubalgebras}
P.~M. Cohn.
\newblock Subalgebras of free associative algebras.
\newblock {\em Proc. London Math. Soc.}, 14(3):618--632, 1964.

\bibitem[Coh06]{Cohn}
P.~M. Cohn.
\newblock {\em Free Ideal Rings and Localization in General Rings}.
\newblock Cambridge Univ. Press, 2006.

\bibitem[EG72]{Erdos}
P.~Erd\H{o}s and R.~L. Graham.
\newblock On a linear diophantine problem of {F}robenius.
\newblock {\em Acta Arith.}, 21:399--408, 1972.

\bibitem[Fox53]{Fox}
Ralph~H. Fox.
\newblock Free differential calculus. {I}. {D}erivation in the free group ring.
\newblock {\em Ann. of Math. (2)}, 57:547--560, 1953.

\bibitem[Liu02]{Liu}
Qing Liu.
\newblock {\em Algebraic geometry and arithmetic curves}, volume~6 of {\em
  Oxford Graduate Texts in Mathematics}.
\newblock Oxford University Press, Oxford, 2002.

\bibitem[Lot02]{Lothaire}
M.~Lothaire.
\newblock {\em Algebraic combinatorics on words}, volume~90 of {\em
  Encyclopedia of Mathematics and its Applications}.
\newblock Cambridge University Press, Cambridge, 2002.

\end{thebibliography}

%\begin{thebibliography}{99}
%
%\bibitem{Bergman} G. Bergman, \emph{Centralizers in free associative algebras}, Trans. Am. Math. Soc., 137 (1969), pp. 327-344
%\bibitem{Cohn} P. M. Cohn, \emph{Free Ideal Rings and Localization in General Rings}, Cambridge Univ. Press (2006)
%\bibitem{CohnSubalgebras} P. M. Cohn, \emph{Subalgebras of free associative algebras}, Proc. London Math. Soc. (3) 14 (1964), pp. 618-632
%\bibitem{Erdos} P. Erd\H{o}s, R.L. Graham, \emph{On a linear diophantine problem of Frobenius}, Acta Arithm., 21 (1972), pp. 399-408
%\bibitem{Fox}Ralph H. Fox, \emph{Free differential calculus. I.  Derivation in the free group ring}, Ann. of Math. (2), 57(1953), pp. 547-560
%\bibitem{Liu} Qing Liu, \emph{Algebraic Geometry and Arithmetic Curves} Oxford University Press (2002)
%\bibitem{Lothaire} M. Lothaire, \emph{Algebraic combinatorics on words}, Cambridge University Press (2002)
%%\bibitem{Shirvani} M. Shirvani, J. Z. Goncalves, \emph{Large free algebras in the ring of fractions of skew polynomial rings}
%%\bibitem{ArtamonovCohn} V. A. Artamonov, P. M. Cohn, \emph{The skew field of rational functions on the quantum plane.} J. Sci. Math. (New York), 1999.
%%\bibitem{L. Makar-Limanov} \emph{Centralizers in the quantum plane Algebra}
%%\bibitem{Artamonov}V. A. Artamonov, \emph{General Quantum Polynomials}
%%\bibitem{AraDicks} P. Ara, W. Dicks, \emph{Universal localizations imbedded in power series rings}
%\end{thebibliography}

\end{document}